\newtheorem{thm}{Theorem}[section]
\newtheorem{lem}[thm]{Lemma}
\newtheorem{prop}[thm]{Proposition}
\newtheorem{defi}[thm]{Definition}
\newtheorem{conj}[thm]{Conjecture}
\author{Wenjie Fang\footnote{This work is partially supported by ANR IComb (ANR-08-JCJC-0011) and ANR Cartaplus (ANR-12-JS02-001-01).} \\ LIAFA, Universit\'e Paris Diderot - Paris 7 \\ B\^atiment Sophie Germain, 75205 Paris Cedex 13, France \\ Email: Wenjie.Fang@liafa.univ-paris-diderot.fr}
\title{Bijective proofs of character evaluations using trace forest of the jeu de taquin}
\begin{document}
\maketitle
\begin{abstract}
Irreducible characters in the symmetric group are of special interest in combinatorics. They can be expressed either combinatorially with ribbon tableaux, or algebraically with contents. In this paper, these two expressions are related in a combinatorial way. We first introduce a fine structure in the famous jeu de taquin called ``trace forest'', with which we are able to count certain types of ribbon tableaux, leading to a simple bijective proof of a character evaluation formula in terms of contents that dates back to Frobenius (1901). Inspired by this proof, we give an inductive scheme that gives combinatorial proofs to more complicated formulae for characters in terms of contents.
\end{abstract}

\newcommand{\mysqr}[1]{rectangle +(0.5,0.5) +(0.25,0.25) node{#1}}
\newcommand{\mycpr}[1]{<^{(#1)}}
\newcommand{\mycpa}[1]{\vee^{(#1)}}
\newcommand{\sgn}{\operatorname{sgn}}
\newcommand{\mydeg}{\operatorname{deg}}

\section{Introduction}

Irreducible characters in the symmetric group has long attracted attention from combinatorists and group theorists. When evaluated at particular partitions, they can be expressed in terms of contents. Their study dates back to Frobenius. In \cite{frobenius1900charaktere}, for a partition $\lambda$ of an integer $n$, the following evaluations were given:

\begin{align*}
n (n-1) \chi^{\lambda}_{2,1^{n-2}} &= 2 f^{\lambda} \left( \sum_{w \in \lambda} c(w) \right), \\
n (n-1) (n-2) \chi^{\lambda}_{3,1^{n-3}} &= 3 f^{\lambda} \left( \sum_{w \in \lambda} (c(w))^2 + n (n-1) / 2 \right), \\
n (n-1) (n-2) (n-3) \chi^{\lambda}_{4,1^{n-4}} &= 4 f^{\lambda} \left( \sum_{w \in \lambda} (c(w))^3 + (2n-3) \sum_{w \in \lambda} c(w) \right).
\end{align*}

Here, $\chi^{\lambda}_{\mu}$ is the irreducible character indexed by $\lambda$ evaluated on the conjugacy class indexed by another partition $\mu$ of $n$, $f^{\lambda}$ is the dimension of the corresponding representation, and we sum over cells $w$ in the Ferrers diagram of $\lambda$, $c(w)$ is called the content of $w$. We postpone detailed definitions for these notions and related ones to Section~\ref{sec:prelim}. We observe that these character evaluations can be expressed with sums over powers of contents called content evaluations. This fact was proved in \cite{corteel2004content} for the general case, and in \cite{lassalle2008explicit} an explicit formula was given for general $\mu$ in $\chi^{\lambda}_{\mu}$. 

Such character evaluation in terms of contents are mostly obtained in an algebraic way, either using the Jucys-Murphy elements (\textit{e.g.} \cite{diaconis1989applications}), or with the help of symmetric functions (\textit{e.g.} \cite{corteel2004content, lassalle2008explicit}). They are also related to shifted symmetric functions on parts of partition (\textit{e.g.} \cite{kerov1994polynomial}). On the other hand, there is a well-developed combinatorial representation theory of the symmetric group (\textit{c.f.} \cite{stanley2001enumerative, sagan2001symmetric}), in which we can express characters combinatorially in terms of ribbon tableaux. It is thus interesting to relate ribbon tableaux to content evaluations using combinatorial tools, for example Sch\"utzenberger's famous jeu de taquin. Furthermore, since functions on contents appear in many contexts, such as in the proof that the generating function of some family of combinatorial maps is a solution to the KP hierarchy (\textit{c.f.} \cite{goulden2008kp}), a better understanding of the combinatorial importance of contents would also help us to better understand other combinatorial phenomena related to contents.

In this article, we look into the fine structure in the jeu de taquin. In Section~\ref{sec:jdt}, we define a notion called ``trace forest'' for skew tableaux that encapsulates the paths of all possible jeu de taquin moves on such tableaux. Using this notion, we give a simple bijective proof of the formula above for $\chi^{\lambda}_{2,1^{n-2}}$ by counting corresponding ribbon tableaux. To the author's knowledge, no such bijective proof is known before. Inspired by this simple proof, in Section~\ref{sec:chara-eval} we investigate the possibility of using trace forest to give bijective proof of more involved character evaluation formulae, which is equivalent to counting certain ribbon tableaux, and for this purpose we sketch a general scheme using structural induction on the tree structure of trace forest. This scheme leads to combinatorial proofs of the other two character evaluation formula above, for $\chi^{\lambda}_{3,1^{n-2}}$ and $\chi^{\lambda}_{4,1^{n-2}}$. Further possible development of this scheme is also discussed.

\section{Preliminaries} \label{sec:prelim}

\subsection{Partitions and standard tableaux}

A \emph{partition} $\lambda$ is a finite non-increasing sequence $(\lambda_i)_{i>0}$ of positive integers. We say that $\lambda$ is a partition of $n$ (noted as $\lambda \vdash n$) if $\sum_{i} \lambda_i = n$. The \emph{Ferrers diagram} of a partition $\lambda$ (also noted as $\lambda$ by abuse of notation) is a graphical representation of $\lambda$ consisting of left-aligned rows of boxes (also called \emph{cells}), in which the $i$-th line has $\lambda_i$ boxes. We assume that cells are all unit squares, and the center of the first cell in the first row is the origin of the plane. For a cell $w$ whose center is in $(i,j)$, we define its \emph{content} to be $c(w)=i-j$. Figure~\ref{fig:tableau} gives an example of a Ferrers diagram, drawn in French convention, with the content for each cell. 

A \emph{standard tableau} of the shape $\lambda \vdash n$ is a filling of the Ferrers diagram of $\lambda$ using integers from $1$ to $n$ such that each number is used exactly once, with increasing rows and columns. Figure~\ref{fig:tableau} also gives an example of a standard tableau. We note by $f^{\lambda}$ the number of standard tableaux of the form $\lambda$, and it is also the dimension of the irreducible representation of the symmetric group indexed by $\lambda$ (\textit{c.f.} \cite{sagan2001symmetric, vershik2004new}).

\begin{figure} 
\begin{multicols}{4}
\centering
\begin{tikzpicture}
\clip (-0.05,-0.05) rectangle (2.55,2.05);
\draw (0,0) \mysqr{0};
\draw (0.5,0) \mysqr{1};
\draw (1,0) \mysqr{2};
\draw (1.5,0) \mysqr{3};
\draw (2,0) \mysqr{4};
\draw (0,0.5) \mysqr{-1};
\draw (0.5,0.5) \mysqr{0};
\draw (1,0.5) \mysqr{1};
\draw (0,1) \mysqr{-2};
\draw (0.5,1) \mysqr{-1};
\draw (1,1) \mysqr{0};
\draw (0,1.5) \mysqr{-3};
\draw (0.5,1.5) \mysqr{-2};
\end{tikzpicture}

(a)

\columnbreak
\begin{tikzpicture}
\clip (-0.05,-0.05) rectangle (2.55,2.05);
\draw (0,0) \mysqr{1};
\draw (0.5,0) \mysqr{2};
\draw (1,0) \mysqr{5};
\draw (1.5,0) \mysqr{9};
\draw (2,0) \mysqr{11};
\draw (0,0.5) \mysqr{3};
\draw (0.5,0.5) \mysqr{7};
\draw (1,0.5) \mysqr{10};
\draw (0,1) \mysqr{4};
\draw (0.5,1) \mysqr{8};
\draw (1,1) \mysqr{13};
\draw (0,1.5) \mysqr{6};
\draw (0.5,1.5) \mysqr{12};
\end{tikzpicture}

(b)

\columnbreak
\begin{tikzpicture}
\clip (-0.05,-0.05) rectangle (2.55,2.05);
\draw (1.5,0) \mysqr{};
\draw (2,0) \mysqr{};
\draw (1,0.5) \mysqr{};
\draw (0,1) \mysqr{};
\draw (0.5,1) \mysqr{};
\draw (1,1) \mysqr{};
\draw (0,1.5) \mysqr{};
\draw (0.5,1.5) \mysqr{};
\end{tikzpicture}

(c)

\columnbreak
\begin{tikzpicture}
\clip (-0.05,-0.05) rectangle (2.55,2.05);
\draw (1.5,0) \mysqr{4};
\draw (2,0) \mysqr{6};
\draw (1,0.5) \mysqr{5};
\draw (0,1) \mysqr{1};
\draw (0.5,1) \mysqr{3};
\draw (1,1) \mysqr{8};
\draw (0,1.5) \mysqr{2};
\draw (0.5,1.5) \mysqr{7};
\end{tikzpicture}

(d)
\end{multicols}
\caption{(a) the Ferrers diagram of the partition $(5,3,3,2)$, with the content for each cell. (b) a standard tableau of the shape $(5,3,3,2)$. (c) the skew diagram of the skew partition $(5,3,3,2) / (3,2)$. (d) a skew tableau of the shape $(5,3,3,2) / (3,2)$.\label{fig:tableau}} 
\end{figure}
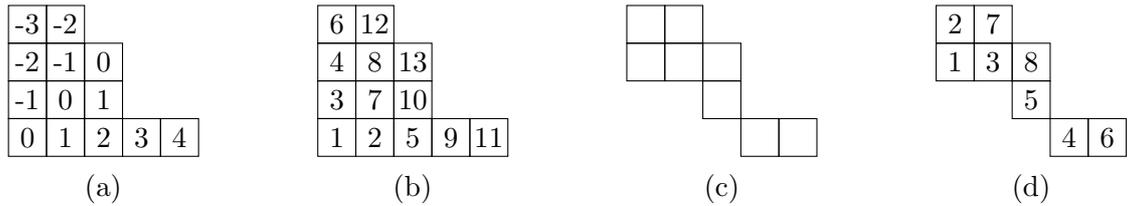

Definitions above can be generalized to so-called skew-partitions. A \emph{skew-partition} $\lambda/\mu$ is a pair of partitions $(\lambda, \mu)$ such that for all $i>0$, $\lambda_i \geq \mu_i$. Graphically, it is equivalent to that the Ferrers diagram of $\lambda$ covers totally that of $\mu$. We then define the \emph{skew diagram} of the from $\lambda/\mu$ as the difference of the Ferrers diagrams of $\lambda$ and of $\mu$, \textit{i.e.} the Ferrers diagram of $\lambda$ without cells that also appear in that of $\mu$. Figure~\ref{fig:tableau} gives an example of a skew diagram.

We now define the counterpart of standard tableau on skew diagrams. A \emph{skew tableau} of shape $\lambda/\mu$ is a filling of the skew diagram of $\lambda/\mu$ with $n$ cells that satisfies all conditions for standard tableaux. Figure~\ref{fig:tableau} gives an example of skew tableau. We note by $f^{\lambda/\mu}$ the number of skew tableaux with shape $\lambda/\mu$.

Standard tableaux and skew tableaux are classical combinatorial objects closely related to the representation theory of the symmetric group. In \cite{vershik2004new, sagan2001symmetric, stanley2001enumerative} details of this relation are described.

\subsection{Ribbon tableaux and the Murnaghan-Nakayama rule}

We note $S_n$ the symmetric group formed by permutations of $n$ elements. Let $\lambda, \mu$ be partitions of $n$, we note by $\chi^{\lambda}_{\mu}$ the \emph{irreducible character} of $S_n$ indexed by $\lambda$ evaluated on the conjugacy class indexed by $\mu$.

Irreducible characters can be expressed in a combinatorial way using the so-called ribbon tableaux. A \emph{ribbon} is a special skew diagram that is connected and without any $2 \times 2$ cells. The \emph{height} $ht(\lambda/\mu)$ of a ribbon $\lambda/\mu$ is the number of rows it spans minus one. A \emph{ribbon tableau} $T$ of the shape $\lambda$ is a sequence of partitions $\varnothing = \lambda^{(0)}, \lambda^{(1)}, \ldots, \lambda^{(k)} = \lambda$ such that $\lambda^{(i)} / \lambda^{(i-1)}$ is a ribbon for all $i>0$. The \emph{entry sequence} of $T$ is $(a_1, a_2, \ldots, a_k)$ with $a_i$ the number of cells in $\lambda^{(i)} / \lambda^{(i-1)}$. The total height of $T$ is defined by $ht(T)=\sum_i ht(\lambda^{(i)} / \lambda^{(i-1)})$, and the sign of $T$ is defined by $\sgn(T)=(-1)^{ht(T)}$. Figure~\ref{fig:ribbon-tableau} gives an example of a ribbon and a ribbon tableau. 

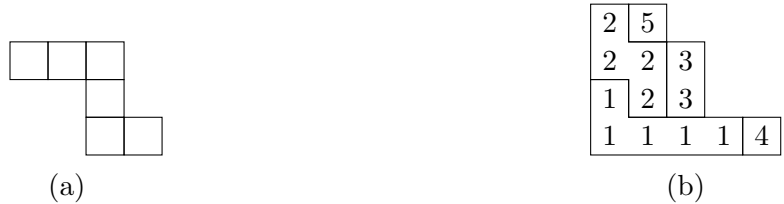
\begin{figure}
\begin{multicols}{2}
\centering
\begin{tikzpicture}
\clip (-0.05,-0.05) rectangle (2.55,2.05);
\draw (1.5,0) \mysqr{};
\draw (2,0) \mysqr{};
\draw (1.5,0.5) \mysqr{};
\draw (1.5,1) \mysqr{};
\draw (1,1) \mysqr{};
\draw (0.5,1) \mysqr{};
\end{tikzpicture}

(a)

\columnbreak
\begin{tikzpicture}
\clip (-0.05,-0.05) rectangle (2.55,2.05);
\draw (0,0) -- (2,0) -- (2,0.5) -- (0.5,0.5) -- (0.5,1) -- (0,1) -- cycle;
\path (0.25,0.25) node {1};
\path (0.75,0.25) node {1};
\path (1.25,0.25) node {1};
\path (1.75,0.25) node {1};
\path (0.25,0.75) node {1};
\draw (0.5,0.5) -- (1,0.5) -- (1,1.5) -- (0.5,1.5) -- (0.5,2) -- (0,2) -- (0,1) -- (0.5,1) -- cycle;
\path (0.75,0.75) node {2};
\path (0.75,1.25) node {2};
\path (0.25,1.25) node {2};
\path (0.25,1.75) node {2};
\draw (1,0.5) -- (1.5,0.5) -- (1.5,1.5) -- (1,1.5) -- cycle;
\path (1.25,0.75) node {3};
\path (1.25,1.25) node {3};
\draw (2,0) -- (2.5,0) -- (2.5,0.5) -- (2,0.5) -- cycle;
\path (2.25,0.25) node {4};
\draw (0.5,1.5) -- (1,1.5) -- (1,2) -- (0.5,2) -- cycle;
\path (0.75,1.75) node {5};
\end{tikzpicture}

(b)

\columnbreak

\end{multicols}
\caption{(a) the ribbon $(5,4,4) / (3,3,1)$ of height $2$. (b) a ribbon tableau $T$ of shape $(5,3,3,2)$ and of entry sequence $(5,4,2,1,1)$, with $\protect\sgn(T)=1$.\label{fig:ribbon-tableau}} 
\end{figure}

The Murnaghan-Nakayama rule (\textit{c.f.} Chapter~7.17 of \cite{stanley2001enumerative}) is a combinatorial interpretation of the irreducible character. According to this rule, we have $\chi^{\lambda}_{\mu}=\sum_T \sgn(T)$, where we sum over all ribbon tableau $T$ of the shape $\lambda$ and with entry sequence $\mu$. 

For a partition $\mu \vdash k$ and an integer $n > k$, we denote by $\mu, 1^{n-k}$ the partition obtained by concatenating $\mu$ with $n-k$ parts of size $1$. In this article, for a fixed ``small'' partition $\mu \vdash k$, we are interested by the evaluation of $\chi^{\lambda}_{\mu, 1^{n-k}}$ for arbitrary $\lambda \vdash n$ in terms of contents, which involves ribbon tableaux of shape $\lambda$ and entry sequence $\mu, 1^{n-k}$.

\begin{lem}[\textit{c.f.} \cite{corteel2004content}] \label{lem:character-as-skew-tableau}
For partitions $\lambda \vdash n$, $\mu \vdash k$ and $n > k$, we have
\[ \chi^{\lambda}_{\mu, 1^{n-k}} = \sum_{\nu \vdash k} f^{\lambda / \nu} \chi^{\nu}_{\mu}. \]
\end{lem}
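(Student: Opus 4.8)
The plan is to apply the Murnaghan--Nakayama rule directly and to split each ribbon tableau at the moment its entry sequence switches from the parts of $\mu$ to the trailing $1$'s. Write $\mu=(\mu_1,\dots,\mu_\ell)$, so that the entry sequence $\mu,1^{n-k}$ has length $\ell+(n-k)$. By the Murnaghan--Nakayama rule, $\chi^{\lambda}_{\mu,1^{n-k}}=\sum_T\sgn(T)$, where $T$ ranges over chains $\varnothing=\lambda^{(0)}\subseteq\lambda^{(1)}\subseteq\cdots\subseteq\lambda^{(\ell+n-k)}=\lambda$ in which $\lambda^{(i)}/\lambda^{(i-1)}$ is a ribbon with $\mu_i$ cells for $i\le\ell$ and a single cell for $i>\ell$.

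First I would set $\nu:=\lambda^{(\ell)}$ and observe that $\nu\vdash k$ and $\nu\subseteq\lambda$, and that the data of $T$ is equivalent to a pair: the truncated chain $\varnothing=\lambda^{(0)}\subseteq\cdots\subseteq\lambda^{(\ell)}=\nu$, which is a ribbon tableau $T'$ of shape $\nu$ with entry sequence $\mu$, together with the tail chain $\nu=\lambda^{(\ell)}\subseteq\lambda^{(\ell+1)}\subseteq\cdots\subseteq\lambda^{(\ell+n-k)}=\lambda$ adding one cell at each step. Labelling the cell added at step $\ell+j$ by $j$, the tail chain is precisely a standard skew tableau of shape $\lambda/\nu$; hence there are exactly $f^{\lambda/\nu}$ choices for it, and for each fixed $\nu$ this correspondence $T\leftrightarrow(T',\text{tail})$ is a bijection.

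Next I would deal with the sign. A ribbon consisting of a single cell has height $0$, so the $n-k$ tail steps contribute nothing to $ht(T)$; therefore $ht(T)=ht(T')$ and $\sgn(T)=\sgn(T')$. Grouping the sum over $T$ according to the value of $\nu$, factoring off the $f^{\lambda/\nu}$ tails, and recognising $\sum_{T'}\sgn(T')=\chi^{\nu}_{\mu}$ by Murnaghan--Nakayama again, I obtain
\[
\chi^{\lambda}_{\mu,1^{n-k}}=\sum_{\substack{\nu\vdash k\\ \nu\subseteq\lambda}} f^{\lambda/\nu}\sum_{T'}\sgn(T')=\sum_{\substack{\nu\vdash k\\ \nu\subseteq\lambda}} f^{\lambda/\nu}\,\chi^{\nu}_{\mu}.
\]
Finally, since $f^{\lambda/\nu}=0$ whenever $\nu\not\subseteq\lambda$, the constraint $\nu\subseteq\lambda$ may be dropped and the sum extended to all $\nu\vdash k$, which is the claimed identity. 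The only step needing care is the bookkeeping of the splitting bijection together with the remark that unit ribbons have height $0$; I do not expect a genuine obstacle, as everything else is a direct invocation of the Murnaghan--Nakayama rule.
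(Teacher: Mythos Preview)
Your proof is correct and follows essentially the same approach as the paper's: split each ribbon tableau of entry sequence $\mu,1^{n-k}$ into its first $\ell$ ribbons (a ribbon tableau of some shape $\nu\vdash k$ with entry sequence $\mu$) and its last $n-k$ unit ribbons (a skew standard tableau of shape $\lambda/\nu$), note that the unit ribbons contribute nothing to the sign, and regroup. Your write-up is in fact a bit more explicit than the paper's about why $\sgn(T)=\sgn(T')$ and about dropping the constraint $\nu\subseteq\lambda$.
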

\begin{proof}
Let $T_0$ be a ribbon tableaux of shape $\lambda$ and entry sequence $\mu, 1^{n-k}$. By retaining only the last $n-k$ ribbons of size $1$ in $T_0$, we obtain a skew tableau $T_1$, and $T = T_0 \setminus T_1$ is a ribbon tableau of entry sequence $\mu$. This is clearly a bijection between $T_0$ and $(T_1, T)$. Moreover, $\sgn(T)=\sgn(T_0)$. We now sum over the sign of all $T_0$ in bijection with $(T_1, T)$, first by the shape $\nu$ of $T$, then by each $T_1$ of shape $\lambda / \nu$, and finally by each $T$, and we finish the proof by the Murnaghan-Nakayama rule.
\end{proof}

By this lemma and the fact that irreducible characters span linearly the space of class functions (\textit{c.f.} Chapter~2.6 of \cite{serre1977linear}), character evaluation is equivalent to computing the number of skew tableaux of a certain shape. It is thus interesting for us to study skew tableaux.

\subsection{Jeu de taquin}

The jeu de taquin is a bijection between skew tableaux of different shapes. It was first introduced by Sch\"utzenberger and proved itself to be a powerful tool in the combinatorial representation theory of the symmetric group. Its applications includes the Sch\"utzenberger involution, the Littlewood-Richardson rule (\textit{c.f} \cite{stanley2001enumerative} for both), and also a bijective proof of Stanley's hook formula (\textit{c.f.} \cite{krattenthaler1999another}). An introduction to the jeu de taquin can be found in the Appendix~A of \cite{stanley2001enumerative}.

We now define the building block of the jeu de taquin on skew tableau, which are local exchanges of entries in the tableaux. Given a skew tableau $T$ with a distinguished entry $*$, the \emph{in-coming step} tries to permute $*$ with one of its ``inward'' neighbors, the ones immediately below or to the left, while conserving the increasing conditions of skew tableau. This is always possible as in the left side of Figure~\ref{fig:jeu-de-taquin}. The \emph{out-going step} is similarly defined, by exchange with entries immediately above or to the right. Figure~\ref{fig:jeu-de-taquin} illustrates the precise rule of both kinds of steps. We verify that in-coming steps are exactly the reverse of out-going steps. 

We now define the \emph{in-coming slide} of the distinguished entry $*$ as successive applications of the in-coming steps to $*$ until it no longer has neighbor below or to the left. Since in-coming steps are reversible, given the distinguished entry and the resulting skew tableau, we can also reverse an in-coming slide. Therefore, the in-coming slide, which is a global operation on tableaux, is also reversible.

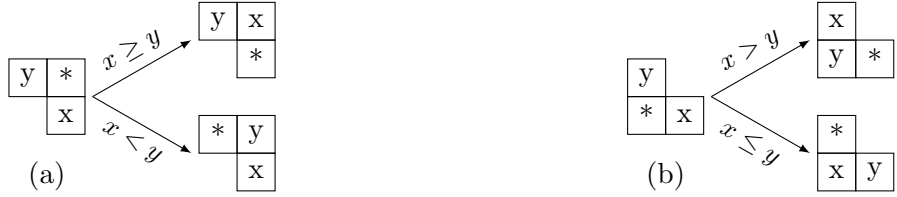
\begin{figure}
\begin{multicols}{2}
\centering
\begin{tikzpicture}
\draw (0,-0.5) \mysqr{x};
\draw (-0.5,0) \mysqr{y};
\draw (0,0) \mysqr{*};

\draw (2.5,0.25) \mysqr{*};
\draw (2,0.75) \mysqr{y};
\draw (2.5,0.75) \mysqr{x};

\draw (2.5,-1.25) \mysqr{x};
\draw (2,-0.75) \mysqr{*};
\draw (2.5,-0.75) \mysqr{y};

\draw[-latex] (0.6, 0) -- (1.9, 0.75) node[midway, above, sloped] {\small $x \geq y$};
\draw[-latex] (0.6, 0) -- (1.9, -0.75) node[midway, below, sloped] {\small $x<y$};

\draw (0,-1.05) node {(a)};
\end{tikzpicture}

\begin{tikzpicture}
\draw (0,-0.5) \mysqr{x};
\draw (-0.5,0) \mysqr{y};
\draw (-0.5,-0.5) \mysqr{*};

\draw (2.5,0.25) \mysqr{*};
\draw (2,0.75) \mysqr{x};
\draw (2,0.25) \mysqr{y};

\draw (2.5,-1.25) \mysqr{y};
\draw (2,-0.75) \mysqr{*};
\draw (2,-1.25) \mysqr{x};

\draw[-latex] (0.6, 0) -- (1.9, 0.75) node[midway, above, sloped] {\small $x > y$};
\draw[-latex] (0.6, 0) -- (1.9, -0.75) node[midway, below, sloped] {\small $x \leq y$};

\draw (0,-1.05) node {(b)};
\end{tikzpicture}
\end{multicols}
\caption{In-coming step (a) and out-going step (b) in jeu de taquin\label{fig:jeu-de-taquin}} 
\end{figure}

We now give a bijection that relates standard tableaux and skew tableaux using the jeu de taquin.

\begin{lem} \label{lem:skew-bijection}
For a partition $\lambda \vdash n$ and an integer $k > 0$, The jeu de taquin gives a bijection between the following two sets:

\begin{itemize}
\item the set of tuples $(T, a_1, \ldots, a_k)$, where $T$ is a standard tableau of shape $\lambda$, and all $a_i$ distinct integers between $1$ and $n$,
\item the set of tuples $(T_0, T_1, a_1, \ldots, a_k)$, where $T_0$ is a skew tableau of shape $\lambda / \mu$ for a certain partition $\mu \vdash k$ of entries from $1$ to $n-k$, $T_1$ a standard tableau of shape $\mu$ of entries from $1$ to $k$, and all $a_i$ distinct integers between $1$ and $n$.
\end{itemize}
\end{lem}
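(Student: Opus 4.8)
\emph{The plan.} I would realise the bijection as the $k$-fold iteration of one elementary step built from a single in-coming slide, and organise the whole thing as an induction on $k$. The data $a_1,\dots,a_k$ is carried along unchanged on both sides, but I would let the $a_i$ actually steer the construction — at step $i$ the entry to be moved is $a_i$ — since this is the version that will later interact well with the trace forest. Thus the real content is, for each fixed $(a_1,\dots,a_k)$, a bijection between standard tableaux $T$ of shape $\lambda$ and pairs $(T_0,T_1)$ of the stated kind.

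\emph{The maps.} For the forward direction, set $T^{(0)}=T$, $\mu^{(0)}=\varnothing$, and view $T^{(0)}$ as a skew tableau of shape $\lambda/\mu^{(0)}$ on $V_0=\{1,\dots,n\}$. For $i=1,\dots,k$: since only $a_1,\dots,a_{i-1}$ have been deleted and the $a_j$ are distinct, $a_i$ still occurs in $T^{(i-1)}$; mark its cell as the distinguished cell $*$ and perform an in-coming slide, i.e.\ keep exchanging $*$ with the larger of its lower and left neighbours until it has none. If $c_i$ is the cell where $*$ stops, put $\mu^{(i)}=\mu^{(i-1)}\cup\{c_i\}$, drop the entry $a_i$, and let $T^{(i)}$ be the resulting skew tableau of shape $\lambda/\mu^{(i)}$ on $V_i=V_{i-1}\setminus\{a_i\}$. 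After $k$ steps, relabel the entries of $T^{(k)}$ by the increasing bijection $V_k\to\{1,\dots,n-k\}$ to obtain $T_0$ and set $T_1(c_i)=i$ on $\mu^{(k)}=:\mu$; the image is $(T_0,T_1,a_1,\dots,a_k)$. Two things need checking while running this: every in-coming slide leaves a genuine skew tableau (this is the defining property of the in-coming step), and the resting cell $c_i$ has both its lower and its left neighbour outside $\lambda$ or already in $\mu^{(i-1)}$, so that $\mu^{(i)}$ is again a partition; hence $\varnothing=\mu^{(0)}\subset\cdots\subset\mu^{(k)}$ with $|\mu^{(i)}|=i$, which is exactly the assertion that $T_1$ is a standard tableau of shape $\mu\vdash k$. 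The final relabelling is harmless because the bijection $V_k\to\{1,\dots,n-k\}$ depends only on the set $\{a_1,\dots,a_k\}$, which can be recovered from the output.

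\emph{The inverse and where the work is.} To invert, peel the steps off for $i=k,k-1,\dots,1$: from $T_1$ one reads $\mu^{(i)}$ and $c_i=\mu^{(i)}\setminus\mu^{(i-1)}$; reinstate $c_i$ as $*$ in a skew tableau of shape $\lambda/\mu^{(i-1)}$ and run the reverse of the in-coming slide, which exists by the reversibility of the in-coming/out-going steps recorded in the preliminaries. The delicate point — and, I expect, the heart of the proof — is the rule telling this reverse slide to stop at precisely the cell that held $a_i$ before step $i$, where $a_i$ is then reinserted. The mechanism is a monotonicity invariant: throughout the forward slide of step $i$, every entry lying on the inward (lower-left) side of the trajectory of $*$ is strictly smaller than $a_i$. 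This holds at the start because $a_i$ exceeded its lower and left neighbours, and it is preserved because the in-coming step always pushes the \emph{larger} inward neighbour outward. Granting it, the reverse slide always finds an entry below $a_i$ to swap with and is forced onward until it reaches the original cell of $a_i$, whose upper and right neighbours were never touched and hence exceed $a_i$ — so the correct stopping rule is ``halt when the upper and right neighbours are both absent or larger than $a_i$'', and reinserting $a_i$ there yields a legitimate skew tableau again. Proving this invariant, concluding that the two maps are mutually inverse, and checking that validity of rows and columns is maintained at every stage, is the crux; once the single step is under control the general statement follows by induction on $k$, the inductive step being the same slide stated for an arbitrary skew shape $\lambda/\nu$ rather than a straight one.
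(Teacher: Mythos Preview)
Your proof is correct and follows essentially the same route as the paper: iterate in-coming slides on $a_1,\dots,a_k$, record the order of removed inner cells in $T_1$, renumber the remainder to get $T_0$, and invert by reversing the slides one at a time using knowledge of the $a_i$. The paper's own argument is much terser --- it simply invokes the line ``since in-coming slides are reversible, given the distinguished entry and the resulting skew tableau, we can also reverse an in-coming slide'' --- whereas you have (correctly) unpacked what that reversibility means operationally: the out-going slide halts exactly when both outward neighbours exceed $a_i$, which is guaranteed by the monotonicity $b_m<\cdots<b_1<a_i$ along the slide path.
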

\begin{proof}
We apply the in-coming slide to $a_1, \ldots, a_k$ successively on $T$. We then obtain a skew tableau $T_0'$ of shape $\lambda / \mu$ for a certain partition $\mu \vdash k$ and a standard tableau $T_1$ of shape $\mu$ of entries from $1$ to $k$ that indicates the exclusion order of cells. The entries in $T_0'$ are all integers from $1$ to $n$ except all $a_i$, but since all the $a_i$ are known, we can renumber entries in $T_0'$ to produce a standard tableau of entries from $1$ to $n-k$, and the reconstruction from $T_0$ to $T_0'$ is easy given all $a_i$. Since in-coming slides are reversible, given the $(a_1, \ldots, a_k), T_0, T_1$, we can reconstruct $T$. We conclude that it is indeed a bijection. Figure~\ref{fig:skew-bijection} gives an example for $\lambda = (5,3,3,2) , \mu=(2)$
\end{proof}

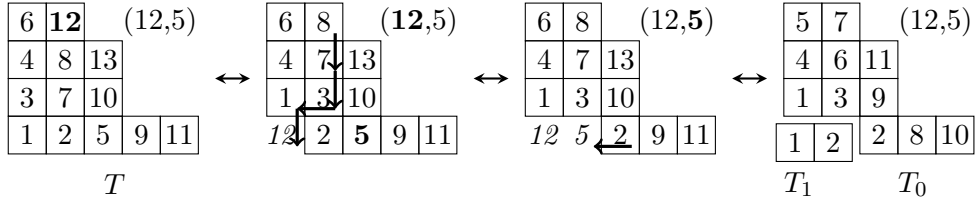
\begin{figure}
\centering
\begin{tikzpicture}
\begin{scope}
\draw (0,0) \mysqr{1};
\draw (0.5,0) \mysqr{2};
\draw (1,0) \mysqr{5};
\draw (1.5,0) \mysqr{9};
\draw (2,0) \mysqr{11};
\draw (0,0.5) \mysqr{3};
\draw (0.5,0.5) \mysqr{7};
\draw (1,0.5) \mysqr{10};
\draw (0,1) \mysqr{4};
\draw (0.5,1) \mysqr{8};
\draw (1,1) \mysqr{13};
\draw (0,1.5) \mysqr{6};
\draw (0.5,1.5) \mysqr{\textbf{12}};
\draw (2, 1.75) node {(12,5)};
\draw (1.4, -0.4) node {$T$};
\end{scope}

\begin{scope}[xshift=3.4cm]
\draw[very thick, ->] (0.9, 1.6) -- (0.9, 1.1); 
\draw[very thick, ->] (0.9, 1.1) -- (0.9, 0.6);
\draw[very thick, ->] (0.9, 0.6) -- (0.4, 0.6);
\draw[very thick, ->] (0.4, 0.6) -- (0.4, 0.1);

\draw (0.2,0.25) node{\textit{12}};
\draw (0.5,0) \mysqr{2};
\draw (1,0) \mysqr{\textbf{5}};
\draw (1.5,0) \mysqr{9};
\draw (2,0) \mysqr{11};
\draw (0,0.5) \mysqr{1};
\draw (0.5,0.5) \mysqr{3};
\draw (1,0.5) \mysqr{10};
\draw (0,1) \mysqr{4};
\draw (0.5,1) \mysqr{7};
\draw (1,1) \mysqr{13};
\draw (0,1.5) \mysqr{6};
\draw (0.5,1.5) \mysqr{8};
\draw (2, 1.75) node {(\textbf{12},5)};
\end{scope}

\begin{scope}[xshift=6.8cm]
\draw[very thick, ->] (1.4,0.1) -- (0.9, 0.1);

\draw (0.25,0.25) node{\textit{12}};
\draw (0.75,0.25) node{\textit{5}};

\draw (1,0) \mysqr{2};
\draw (1.5,0) \mysqr{9};
\draw (2,0) \mysqr{11};
\draw (0,0.5) \mysqr{1};
\draw (0.5,0.5) \mysqr{3};
\draw (1,0.5) \mysqr{10};
\draw (0,1) \mysqr{4};
\draw (0.5,1) \mysqr{7};
\draw (1,1) \mysqr{13};
\draw (0,1.5) \mysqr{6};
\draw (0.5,1.5) \mysqr{8};
\draw (2, 1.75) node {(12,\textbf{5})};
\end{scope}

\begin{scope}[xshift=10.2cm]
\draw (-0.1,-0.1) \mysqr{1};
\draw (0.4,-0.1) \mysqr{2};

\draw (1,0) \mysqr{2};
\draw (1.5,0) \mysqr{8};
\draw (2,0) \mysqr{10};
\draw (0,0.5) \mysqr{1};
\draw (0.5,0.5) \mysqr{3};
\draw (1,0.5) \mysqr{9};
\draw (0,1) \mysqr{4};
\draw (0.5,1) \mysqr{6};
\draw (1,1) \mysqr{11};
\draw (0,1.5) \mysqr{5};
\draw (0.5,1.5) \mysqr{7};
\draw (2, 1.75) node {(12,5)};

\draw (0.2, -0.4) node{$T_1$};
\draw (1.7, -0.4) node{$T_0$};
\end{scope}

\draw[thick,<->,shorten >=2pt,shorten <=2pt,>=stealth] (2.65, 1) -- (3.25,1);
\draw[thick,<->,shorten >=2pt,shorten <=2pt,>=stealth] (6.05, 1) -- (6.65,1);
\draw[thick,<->,shorten >=2pt,shorten <=2pt,>=stealth] (9.45, 1) -- (10.05,1);
\end{tikzpicture}
\caption{Example of bijection relation standard tableaux and skew tableaux via the jeu de taquin\label{fig:skew-bijection}} 
\end{figure}

From the proof of the lemma above, we can conclude that, to calculate a certain $f^{\lambda / \mu}$ for $\mu \vdash k$, it suffices to count the number of tuples $(T, (a_1, \ldots, a_k))$, with $T$ a standard tableau of shape $\lambda$, that are associated to $(T_0, T_1, (a_1, \ldots, a_k))$, with $T_0$ of shape $\lambda / \mu$ via the jeu de taquin. To accomplish this task, we need to know more about the fine structure of the jeu de taquin.

\section{Trace forest of jeu de taquin} \label{sec:jdt}

We will now define a structure related to the jeu de taquin in skew tableaux called ``trace forest''. It is essentially a directed graph whose vertices are cells in the tableau, and it encapsulates the trace of the in-coming slide of each entries.

\begin{defi}
Given a skew tableau $T$, we define its \emph{trace forest}, which is a directed graph $F$ with cells in $T$ as vertices, as follows. For a cell $w$ in $T$ with neighbors below or to the left and $a$ its entry, we point an arc from $w$ to the destination of in the in-coming step for $a$. It is clear that no cycle can exist, thus the constructed graph is a forest, rooted at cells without neighbor below or to the left.
\end{defi}

Figure~\ref{fig:trace-forest} gives some examples of skew tableaux and their trace forests. For a skew tableau $T$, let $F$ be its trace forest. By definition, the in-coming step with any cell $c \in T$ follows exactly the arc from $c$ in the trace forest. With simple induction on $F$, we can see that the in-coming slide of the entry of any cell $c \in T$ coincides with the path from $c$ to its root in $F$, which gives the structure $F$ the name ``trace forest''.

\begin{figure}
\centering
\begin{tikzpicture}
\begin{scope}
\draw (0,0) \mysqr{1};
\draw (0.5,0) \mysqr{2};
\draw (1,0) \mysqr{5};
\draw (1.5,0) \mysqr{9};
\draw (2,0) \mysqr{11};
\draw (0,0.5) \mysqr{3};
\draw (0.5,0.5) \mysqr{7};
\draw (1,0.5) \mysqr{10};
\draw (0,1) \mysqr{4};
\draw (0.5,1) \mysqr{8};
\draw (1,1) \mysqr{13};
\draw (0,1.5) \mysqr{6};
\draw (0.5,1.5) \mysqr{12};
\end{scope}

\begin{scope}[xshift=3cm]
\draw (0,0) \mysqr{};
\draw (0.5,0) \mysqr{};
\draw (1,0) \mysqr{};
\draw (1.5,0) \mysqr{};
\draw (2,0) \mysqr{};
\draw (0,0.5) \mysqr{};
\draw (0.5,0.5) \mysqr{};
\draw (1,0.5) \mysqr{};
\draw (0,1) \mysqr{};
\draw (0.5,1) \mysqr{};
\draw (1,1) \mysqr{};
\draw (0,1.5) \mysqr{};
\draw (0.5,1.5) \mysqr{};

\draw[very thick] (0.25, 1.75) -- (0.25,0.25);
\draw[very thick] (2.25,0.25) -- (0.25,0.25);
\draw[very thick] (0.75,1.75) -- (0.75,0.75) -- (0.25, 0.75);
\draw[very thick] (1.25, 1.25) -- (1.25, 0.75) -- (0.75, 0.75);
\end{scope}

\begin{scope}[xshift=6cm]
\draw (1,0) \mysqr{5};
\draw (1.5,0) \mysqr{9};
\draw (2,0) \mysqr{10};
\draw (0.5,0.5) \mysqr{3};
\draw (1,0.5) \mysqr{4};
\draw (0,1) \mysqr{1};
\draw (0.5,1) \mysqr{6};
\draw (1,1) \mysqr{7};
\draw (0,1.5) \mysqr{2};
\draw (0.5,1.5) \mysqr{8};
\end{scope}

\begin{scope}[xshift=9cm]
\draw (1,0) \mysqr{};
\draw (1.5,0) \mysqr{};
\draw (2,0) \mysqr{};
\draw (0.5,0.5) \mysqr{};
\draw (1,0.5) \mysqr{};
\draw (0,1) \mysqr{};
\draw (0.5,1) \mysqr{};
\draw (1,1) \mysqr{};
\draw (0,1.5) \mysqr{};
\draw (0.5,1.5) \mysqr{};

\draw[very thick] (0.25, 1.75) -- (0.25, 1.25);
\draw[very thick] (0.75, 1.75) -- (0.75, 0.75);
\draw[very thick] (1.25, 1.25) -- (0.75, 1.25);
\draw[very thick] (2.25, 0.25) -- (1.25, 0.25) -- (1.25, 0.75);
\end{scope}
\end{tikzpicture}
\caption{Examples of skew tableaux and their trace forests\label{fig:trace-forest}} 
\end{figure}
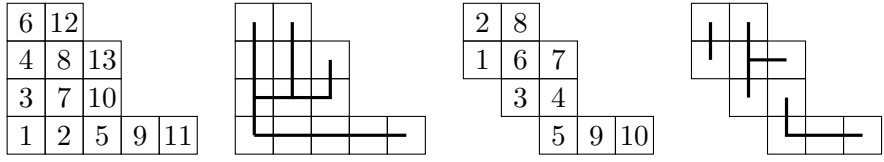

We now study how an in-coming slide changes the trace forest of a skew tableau. We begin with some definitions. For a cell $c$ in $F$, we call its child to the right the \emph{right child}, and its child above the \emph{upper child}, noted as $c_<$ and $c_\vee$. We note by $F_<$ and $F_\vee$ the subtree of $F$ rooted in $c_<$ and $c_\vee$ respectively. 

Let $T$ be a skew tableau, $S$ a subtree of its trace forest rooted in $a$ and $a_{<}, a_{\vee}$ its right and upper child (if they exist). For a cell $c \in S$, we note $T^{c}$ the tableau obtained by applying an in-coming slide on $c$, and $F^{c}$ its trace forest. The cells in $S \setminus \{ a \}$ are partitioned into the following categories, as in Figure~\ref{fig:fine-structures}:

\begin{itemize}
\item $D_1(c)$ (resp. $D_2(c)$), the subtree rooted at the right child (resp. the upper child) of $c$;
\item $P_1(c)$ (resp. $P_2(c)$), the set of ancestors of $c$ (including $c$) that issue a horizontal (resp. vertical) arc;
\item $R(c)$ (resp. $A(c)$), the set of cells not in categories above and whose in-coming slide path lies below (resp. above) that of $c$.
\end{itemize}

We note $C_<(c,S) = D_1(c) \cup P_1(c) \cup R(c)$ and $C_\vee(c,S) = D_2(c) \cup P_2(c) \cup A(c)$. We can see that $C_<(c,S)$ and $C_\vee(c,S)$ divide cells in $S \setminus \{ a \}$ into two groups. In the following lemma, we see that this grouping of cells is related to the structure of $F^{c}$ after the in-coming slide of $c$ applied to $T$.

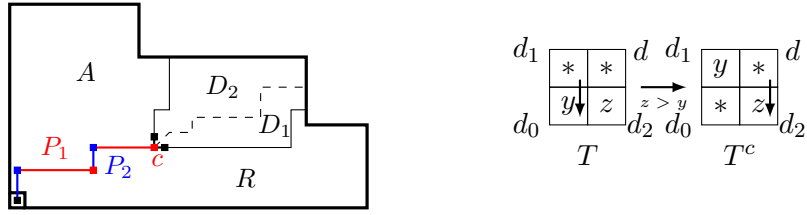
\begin{figure}
\centering
\begin{tikzpicture}

\begin{scope}
\draw[very thick] (-0.1, 2.6) -- (-0.1, 0.1) -- (0.1, 0.1) -- (0.1,-0.1) -- (4.6, -0.1) -- (4.6, 1) -- (3.8, 1) -- (3.8, 1.9) -- (1.6, 1.9) -- (1.6, 2.6) -- cycle;
\draw[very thick] (-0.1,0.1) -- (-0.1,-0.1) -- (0.1,-0.1);
\draw[thick, blue] (0,0) -- (0,0.4);
\draw[thick, red] (0,0.4) -- node[above, midway]{\small \color{red} $P_1$} (1,0.4);
\draw[thick, blue] (1,0.4) -- (1, 0.7) node[right, very near start]{\small \color{blue} $P_2$};
\draw[thick, red] (1,0.7) -- (1.8, 0.7);
\draw[thick, black] (1.8, 0.84) -- (1.8, 0.7) -- (1.94, 0.7);
\draw[dashed] (1.8, 0.7) -- (2, 0.9) -- (2.3, 0.9) -- (2.3, 1.1) -- (3.2, 1.1) -- (3.2, 1.5) -- (3.8, 1.5);
\draw (1.8, 0.84) -- (1.8, 1.2) -- (2, 1.2) -- (2, 1.9);
\draw (1.94, 0.7) -- node[above, very near end]{\small $D_1$} (3.6, 0.7) -- (3.6, 1.2) -- (3.8, 1.2);

\filldraw[black] (-0.04,-0.04) rectangle +(0.08, 0.08);
\filldraw[blue] (-0.04,0.36) rectangle +(0.08, 0.08);
\filldraw[red] (0.96,0.36) rectangle +(0.08, 0.08);
\filldraw[blue] (0.96,0.66) rectangle +(0.08, 0.08);
\filldraw[red] (1.76,0.66) rectangle +(0.08, 0.08) node[below]{\small $c$};
\filldraw[black] (1.9,0.66) rectangle +(0.08, 0.08);
\filldraw[black] (1.76,0.8) rectangle +(0.08, 0.08);
\draw (3, 0.3) node{\small $R$};
\draw (0.9, 1.7) node{\small $A$};
\draw (2.7, 1.5) node{\small $D_2$};
\end{scope}

\begin{scope}[xshift=7cm, yshift=1cm]
\draw (0,0) \mysqr{$y$};
\draw (0,0.5) \mysqr{$*$};
\draw (0.5,0) \mysqr{$z$};
\draw (0.5,0.5) \mysqr{$*$};
\draw[-latex, thick] (0.4,0.6) -- (0.4,0.1);
\draw (0.5,-0.4) node{$T$};
\draw (1.2, 1) node{$d$};
\draw (1.2, 0) node{$d_2$};
\draw (-0.3,0) node{$d_0$};
\draw (-0.3,1) node{$d_1$};

\draw (2,0) \mysqr{$*$};
\draw (2,0.5) \mysqr{$y$};
\draw (2.5,0) \mysqr{$z$};
\draw (2.5,0.5) \mysqr{$*$};
\draw[-latex, thick] (2.9,0.6) -- (2.9,0.1);
\draw (2.5,-0.4) node{$T^c$};
\draw (3.2, 1) node{$d$};
\draw (3.2, 0) node{$d_2$};
\draw (1.7, 0) node{$d_0$};
\draw (1.7, 1) node{$d_1$};

\draw[-latex, thick] (1.2,0.5) -- node[below, midway]{\tiny $z>y$} (1.8,0.5);
\end{scope}
\end{tikzpicture}
\caption{Fine structure of the trace forest\label{fig:fine-structures}} 
\end{figure}

\begin{lem} \label{lem:trace-forest-separation}
For a skew tableau $T$, let $S$ be a subtree in its trace forest, and $c \in S$. For $d \in C_<(c,S)$ (resp. $d \in C_\vee(c,S)$), the in-coming slide path of $d$ in $T^{c}$ lies to the right (resp. above) of that of $c$ in $T$.
\end{lem}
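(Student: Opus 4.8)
<br>

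The plan is to prove the statement by structural induction on the subtree $S$, tracking how a single in-coming step transforms the trace forest and how the categories $D_1, D_2, P_1, P_2, R, A$ are affected. The base case is when $c = a$ is the root of $S$ and $S$ has a single vertex, where the claim is vacuous. For the inductive step, I would analyze the first in-coming step of the slide of $c$: the entry of $c$ moves to its parent position (following the arc of the trace forest out of $c$), and the former parent cell of $c$ receives a new entry. This first step locally reconfigures the trace forest in a bounded region, as depicted in Figure~\ref{fig:fine-structures} with the cells $d_0, d_1, d_2, d$: the comparison $z > y$ governing the in-coming step at $c$ determines whether the arc from the parent now points rightward or upward, and this is precisely the dichotomy that sorts cells into $C_<$ versus $C_\vee$.

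The key steps, in order, are as follows. First I would fix notation for the first in-coming step of $c$, identifying the parent $p$ of $c$ in $F$ (the cell to which $c$'s arc points), the two candidate destinations of $p$'s own in-coming step, and the entries involved; this isolates the single local comparison that determines the new shape of the forest near $p$. Second, I would verify that for cells $d$ lying strictly ``below'' $c$'s slide path (the set $R(c)$, together with $D_1(c)$ and the horizontal-arc ancestors $P_1(c)$), the local reconfiguration at $p$ does not move their slide paths across $c$'s new path — here one uses that in-coming steps only permute entries along a monotone staircase and that the relative vertical/horizontal position of disjoint staircases is preserved. Third, and symmetrically, I would handle $d \in C_\vee(c,S)$, showing their paths end up above $c$'s. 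Fourth, I would peel off this first step: after it, $c$'s entry sits at $p$, and the remaining slide of $c$ is an in-coming slide within the subtree $S'$ of $F^{(\text{one step})}$ rooted appropriately; by the induction hypothesis applied to $S'$ (which is strictly smaller, or at least closer to the root), the remaining slide path respects the same left/above separation, and combining the first step with the inductive conclusion gives the full claim. Throughout, the reversibility of in-coming steps (noted after Figure~\ref{fig:jeu-de-taquin}) lets me argue about ``before'' and ``after'' configurations interchangeably.

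I expect the main obstacle to be the bookkeeping in the third and fourth steps: precisely describing how the six categories $D_1, D_2, P_1, P_2, R, A$ with respect to $c$ in $S \subseteq F$ correspond to the analogous categories with respect to the moved entry in $S' \subseteq F^{(\text{one step})}$, since the local surgery at the parent cell $p$ can reclassify a bounded number of cells (for instance an ancestor of $c$ that issued a horizontal arc may, after the step, issue a vertical one, or a cell in $A(c)$ near $p$ may merge into $P_2$). The cleanest way around this is to prove a slightly stronger ``one-step'' lemma first — that a single in-coming step at $c$ sends $C_<(c,S)$ into the cells whose (partially slid) paths are weakly right of $c$'s partially slid path and $C_\vee(c,S)$ into those weakly above — and then iterate it along the slide, so that the induction is on the length of the remaining slide rather than on the size of $S$. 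The geometric fact underpinning everything is that two in-coming slide paths in a skew tableau are non-crossing lattice paths, so ``lies to the right of'' and ``lies above'' are mutually exclusive and stable, and each local step can only push a path to the correct side, never across.
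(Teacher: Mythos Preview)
Your plan is viable in principle but takes a substantially longer route than the paper's own proof, and it never isolates the one observation that makes the lemma immediate.

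The paper does \emph{not} induct along the slide of $c$ at all. It argues directly about the final tableau $T^{c}$: to show the two slide-path families separate, it is enough to show that in the trace forest of $T^{c}$ no arc goes from a cell $d\in C_<(c,S)$ to its left neighbour $d_1$ when $d_1\in C_\vee(c,S)$ (and symmetrically). The crucial point---which your proposal never surfaces---is that the only way such adjacency can occur is with $d_1\in P_2(c)$, i.e.\ $d_1$ lies on the slide path of $c$ and issued a vertical arc there. But then the entry now sitting in $d_1$ in $T^{c}$ is precisely the entry $y$ that used to sit in the south-west cell $d_0$ in $T$, while $d_2$ (below $d$) is off the slide path and still holds its original entry $z$. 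Since $d_0$ is immediately left of $d_2$ in the original skew tableau, $y<z$, so in $T^{c}$ the arc from $d$ points down to $d_2$, not left to $d_1$. That single inequality finishes the proof; no induction, no tracking of category changes, no one-step lemma.

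By contrast, your step-by-step induction forces you to describe how all six categories $D_1,D_2,P_1,P_2,R,A$ reassemble after each swap, which you yourself flag as the main obstacle. That bookkeeping can be done, but it is exactly what the paper's argument bypasses. I would also caution against your closing appeal to the ``geometric fact'' that two in-coming slide paths are non-crossing: in this context that statement is essentially equivalent to Lemma~\ref{lem:trace-forest-separation} itself, so invoking it risks circularity. If you want to keep an inductive structure, the cleaner invariant to carry is the one the paper uses implicitly: along the boundary between $C_<$ and $C_\vee$ in $T^{c}$, the entries inherited from $T$ already satisfy the inequality that orients every boundary arc away from the other side.
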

\begin{proof}
We only need to show that no arc goes between elements in $C_<(c,S)$ and $C_\vee(c,S)$ in the trace forest of $T^{c}$, and it will entail the lemma because of the relative position of $C_<(c,S)$ and $C_\vee(c,S)$. We will first prove that there is no arc from $C_<(c,S)$ to $C_\vee(c,S)$ in the trace forest of $T^{c}$. Let $d \in C_<(c,S)$ and $x$ the entry of $d$ in $T^{c}$, $d_1$ be the cell immediately to the left of $d$, $d_2$ the one below $d$ and $d_0$ the one on the south-west. There are three cases: $d \in P_1(c)$, $d \in D_1(c)$ and $d \in R(c)$. 

For $d \in P_1(c)$ and $d \in R(c)$, the only possible way that $d_1 \in C_\vee(c,S)$ is the case $d_1 \in P_2(c)$. For $d \in D_1(c)$, it suffices to prove for the root $d=r_1$ of $D_1(c)$, and the only possible way that $d_1 \in C_\vee(c,S)$ is still $d_1 \in P_2(c)$. Therefore, in all 3 cases, we have that the arc of $d_1$ points to $d_0$ in $F$.

Let $y$ be the entry in $d_1$ and $z$ in $d_2$ in $T^{c}$. By definition of $P_2(c)$, $d_0$ contains $y$, and it entails $y < z$ by the definition of skew tableau. Therefore in $T^{c}$, the arc from $d$ points to $d_2$ according to the rule of the jeu de taquin, and we have the wanted separation. The right side of Figure~\ref{fig:fine-structures} illustrates this argument.

The proof that there is no arc from $C_\vee(c,S)$ to $C_<(c,S)$ in the trace forest of $T^{c}$ is similar.
\end{proof}

Lemma~\ref{lem:trace-forest-separation} can be seen as a clarification of an argument in Lemma~$\mathrm{HC}^{*}$ in \cite{krattenthaler1999another}. Using Lemma~\ref{lem:trace-forest-separation}, we have the following simple bijective proof of a well-known character formula (\textit{c.f.} \cite{ingram1950some, corteel2004content, lassalle2008explicit}). To the knowledge of the author, no purely bijective proof is known before for this simple formula.

\begin{thm} \label{thm:mu-2}
For a partition $\lambda \vdash n$, we identify $\lambda$ and its Ferrers diagram, and we have
\[ n(n-1)\chi^{\lambda}_{(2,1^{n-2})} = 2 f^{\lambda} \sum_{w \in \lambda} c(w). \]
\end{thm}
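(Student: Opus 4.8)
The plan is to feed the two reduction lemmas into the trace-forest machinery, turn the character value into a signed count of pairs, and then evaluate that count by three short bookkeeping arguments on the trace forest.

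First I would specialize Lemma~\ref{lem:character-as-skew-tableau} to $\mu=(2)$, $k=2$: the only partitions of $2$ are $(2)$ and $(1,1)$, and $\chi^{(2)}_{(2)}=1$, $\chi^{(1,1)}_{(2)}=-1$, so $\chi^\lambda_{(2,1^{n-2})}=f^{\lambda/(2)}-f^{\lambda/(1,1)}$. Next, Lemma~\ref{lem:skew-bijection} with $k=2$: since each of the shapes $(2)$ and $(1,1)$ has a unique standard tableau, the recording tableau $T_1$ is determined by the removed shape $\mu$, so $n(n-1)f^{\lambda/\nu}$ is exactly the number of triples $(T,a_1,a_2)$ (with $T$ standard of shape $\lambda$ and $a_1\neq a_2$ in $[n]$) for which sliding $a_1$ then $a_2$ into $T$ produces a skew tableau of shape $\lambda/\nu$. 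Writing $\varepsilon(T,a_1,a_2)=+1$ when the removed shape is $(2)$ and $-1$ when it is $(1,1)$, this gives
\[ n(n-1)\chi^\lambda_{(2,1^{n-2})} \;=\; \sum_{T}\sum_{a_1\neq a_2}\varepsilon(T,a_1,a_2). \]

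Now fix $T$ with trace forest $F$, a single tree rooted at the bottom-left cell $o=(1,1)$. If $a_1$ occupies the cell $c$, then after its in-coming slide the entry reaches $o$ and $o$ becomes the removed cell; the remaining skew tableau has its two trace-forest roots at $o_<$ (the cell right of $o$) and $o_\vee$ (the cell above $o$). By Lemma~\ref{lem:trace-forest-separation} applied with $S=F$, these two components are precisely $C_<(c,F)$ and $C_\vee(c,F)$; hence sliding $a_2$ next removes $o_<$ (so $\mu=(2)$) exactly when $a_2$'s cell lies in $C_<(c,F)$, and removes $o_\vee$ (so $\mu=(1,1)$) exactly when it lies in $C_\vee(c,F)$. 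As $a_2$ runs over all values $\neq a_1$, its cell runs over all cells of $\lambda$ other than $o$, so $\sum_{a_2}\varepsilon(T,a_1,a_2)=|C_<(c,F)|-|C_\vee(c,F)|$, and therefore
\[ n(n-1)\chi^\lambda_{(2,1^{n-2})} \;=\; \sum_{T}\;\sum_{c\in\lambda}\bigl(|C_<(c,F)|-|C_\vee(c,F)|\bigr). \]
The heart of the argument is then to prove that $\sum_{c\in\lambda}\bigl(|C_<(c,F)|-|C_\vee(c,F)|\bigr)=2\sum_{w\in\lambda}c(w)$ for every single $T$; summing over the $f^\lambda$ standard tableaux finishes the theorem. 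I would split $C_<(c,F)=D_1(c)\sqcup P_1(c)\sqcup R(c)$ and $C_\vee(c,F)=D_2(c)\sqcup P_2(c)\sqcup A(c)$ and handle the three pieces separately. Every arc of $F$ points to the immediate left or immediate lower neighbour, so the path from $c$ to $o$ consists of unit left-steps and unit down-steps; hence $|P_1(c)|-|P_2(c)|$ equals (number of left-steps) minus (number of down-steps) $=(\mathrm{col}(c)-1)-(\mathrm{row}(c)-1)=c(c)$, and $\sum_c\bigl(|P_1(c)|-|P_2(c)|\bigr)=\sum_w c(w)$. Swapping the order of summation, for a fixed cell $d$ the cells $c$ with $d\in D_1(c)$ are those at which the path from $d$ takes a left-step, so there are $\mathrm{col}(d)-1$ of them, and likewise $\#\{c: d\in D_2(c)\}=\mathrm{row}(d)-1$; thus $\sum_c\bigl(|D_1(c)|-|D_2(c)|\bigr)=\sum_d c(d)=\sum_w c(w)$. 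Finally, for cells $c,d$ incomparable in $F$ exactly one of $d\in R(c)$, $d\in A(c)$ holds and $d\in R(c)\iff c\in A(d)$, so $\sum_c|R(c)|=\sum_c|A(c)|$ and the side-pieces contribute $0$. Adding the three contributions gives $\sum_w c(w)+\sum_w c(w)+0=2\sum_w c(w)$, as needed.

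The step I expect to be the main obstacle is the set-up rather than the final bookkeeping: one must check carefully that Lemma~\ref{lem:skew-bijection} at $k=2$ really counts triples with recording tableau forced by $\mu$, and that Lemma~\ref{lem:trace-forest-separation} does identify the two components of the post-slide forest with $C_<(c,F)$ and $C_\vee(c,F)$ — in particular that a slide path staying to the right of $c$'s path necessarily ends at $o_<$, and that the degenerate case $c=o$ (first slide trivial, yet $o$ still removed) is handled correctly. Once the reduction is pinned down, the three sub-identities are all one-liners, and it is pleasant that the content $c(c)$ appears twice and transparently: once as ``left-steps minus down-steps'' along the root path in the trace forest, and once through the dual count of descendants.
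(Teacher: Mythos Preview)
Your argument is correct and is essentially the paper's proof, reorganised. Both use the same reduction (Lemma~\ref{lem:character-as-skew-tableau} and Lemma~\ref{lem:skew-bijection}) and the same structural input (Lemma~\ref{lem:trace-forest-separation}) to turn the problem into evaluating $\sum_{c}\bigl(|C_<(c,F)|-|C_\vee(c,F)|\bigr)$ over a single standard tableau; the paper then groups each unordered pair $\{a,b\}$ and splits into ``incomparable'' (which cancel) versus ``one is an ancestor of the other'' (which contribute $2c(b)$), whereas you fix the first slide and decompose $C_<\!\setminus C_\vee$ into the pieces $P$, $D$, $R$, $A$ --- your $R/A$ symmetry is exactly their incomparable cancellation, and your $P$- and $D$-contributions are the two orderings of their ancestor case, so the two bookkeepings are in one-to-one correspondence.
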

\begin{proof}
Since from Lemma~\ref{lem:character-as-skew-tableau} follows $\chi^{\lambda}_{(2,1^{n-2})} = f^{\lambda / (2)} - f^{\lambda / (1,1)}$, we want to count the difference between the number of skew tableaux of shape $\lambda / (2)$ and those of shape $\lambda / (1,1)$.

Let $(T,a,b)$ be a tuple with $T$ standard tableau of shape $\lambda$ and $a \neq b$ two entries in $T$. We let $F$ denote the only tree in the trace forest of $T$, and we let $T_1(T,a,b)$ denote the skew tableau $T_1$ such that $(T,a,b)$ is associated to $(T_1,a,b)$ in the bijection in Lemma~\ref{lem:skew-bijection}, in which $T_0$ is fixed in our case. Therefore, when going through all $(T,a,b)$, $T_1(T,a,b)$ goes over each skew tableau of shape $\lambda / (2)$ or $\lambda / (1,1)$ exactly $n(n-1)$ times.

For entries $a,b$ with $a<b$, we consider the contribution of $T_1(T,a,b), T_1(T,b,a)$ to $f^{\lambda / (2)} - f^{\lambda / (1,1)}$.

If $a$ is not an ancestor of $b$ in $T$, they have a common ancestor $c$, and by symmetry we can suppose that $b$ is on the subtree rooted in the upper child of $c$. From Lemma~\ref{lem:trace-forest-separation}, we know that $b \in A(a) \subset C_\vee(a,F)$ in $T^a$ and $a \in R(b) \subset C_<(b,F)$ in $T^b$, therefore $T_1(T,a,b)$ is of shape $\lambda / (1,1)$, while $T_1(T,b,a)=0$ is of shape $\lambda / (2)$. Thus this case does not contribute to $f^{\lambda / (2)} - f^{\lambda / (1,1)}$.

The other case is that $a$ is an ancestor of $b$ in $T$. If the path from $b$ to $a$ ends with a horizontal arc pointing at $a$, then from Lemma~\ref{lem:trace-forest-separation}, we have $b \in D_1(a) \subset C_<(a,F)$ in $T^a$ and $a \in P_1(b) \subset C_<(b,F)$ in $T^b$, therefore $T_1(T,a,b)$ and $T_1(T,b,a)$ are both of shape $\lambda / (2)$. Otherwise, if the path from $b$ to $a$ ends with a vertical arc pointing at $a$, $T_1(T,a,b)$ and $T_1(T,b,a)$ are both of shape $\lambda / (1,1)$. The path in the trace forest from $b$ at $(i, j)$ to the cell at $(0,0)$ consists of $i$ horizontal arcs and $j$ vertical arcs. Therefore, if we sum over all ancestors $a$ of $b$, among all $T_1(T,a,b)$ and $T_1(T,b,a)$, we have $2i$ tableaux of shape $\lambda / (2)$ and $2j$ ones of shape $\lambda / (1,1)$, which gives a contribution of $2c(b)$ to $f^{\lambda / (2)} - f^{\lambda / (1,1)}$. This contribution is independent of $T$.

In the end, we have $n(n-1)(f^{\lambda / (2)} - f^{\lambda / (1,1)}) = 2 f^{\lambda} \sum_{w \in \lambda} c(w)$, thus finish the proof.
\end{proof}

In the proof above, there are two cases for entries $a,b$: the case where $a$ is not an ancestor of $b$ that contributes nothing, and the other case where contents appears naturally in the contribution. In the former case, $a,b$ play the same role, which reflects some kind of symmetry. For more general cases, we need to apply the jeu de taquin to several entries and count the skew tableaux obtained of a certain shape. It is thus desirable to extract similar symmetries. However, the task becomes monstrous when we pass to more entries due to case analysis. To surmount this difficulty, it is natural to try to use the tree structure of the trace forest to implicitly extract the symmetry we want.

\section{Character evaluation using trace forest} \label{sec:chara-eval}

We will now use the notion of trace forest to calculate $f^{\lambda / \mu}$ with fixed small $\mu$. In \cite{corteel2004content} and \cite{lassalle2008explicit} (see also \cite{kerov1994polynomial}), it was proved that $\chi^{\lambda}_{\mu}$ can be expressed using so-called ``content evaluation''. By Lemma~\ref{lem:character-as-skew-tableau}, we know that $f^{\lambda / \mu}$ can also be expressed by such content evaluation. It is now interesting to study the interaction between content evaluation and trace forest, and how it applies to character evaluation. 

In this section, we will define a notion called the ``inductive form'' of functions on any subtree in the trace forest. It enables the computation of such functions by identification of inductive form. We also give the inductive form of several content evaluations. Then we proceed to the bijective counting of skew tableaux of different shapes using the jeu de taquin, and by identification of inductive form, we obtain the expression of several $f^{\lambda / \mu}$ for general $\lambda$ and small $\mu$ in terms of content evaluation, which gives bijective proofs of various character evaluation formulae.

\subsection{Content powersums}

We will start by defining various content powersums on subtrees of the trace forest of skew-tableaux related to contents. 

For a skew tableau $T$, let $S$ be a subtree in its trace forest and $a$ its root. We denote by $c_{a}(w)$ the \emph{relative content} of a cell $w$ w.r.t. the root $a$, \textit{i.e.} $a$ is taken as the origin when computing the relative content $c_{a}(w)$. We have $c_{a}(w) = c(w) - c(a)$, where $c$ stands for the normal content. For any partition $\alpha=(\alpha_1, \ldots, \alpha_l) \vdash k$, we define \emph{content power sums} of $S$ denoted by $cp^{\alpha}(S)$ as follows, with the convention $0^0=1$.
\[ cp^{\alpha}(S) = \prod_{i=1}^{l} \sum_{w \in S} c_a(w)^{\alpha_i - 1} \]

For any standard tableau $T$ of shape $\lambda$ and $F_T$ its only tree in its trace forest, we denote $cp^{\alpha}(\lambda) = cp^{\alpha}(F_T)$.

The definition of $cp^{\alpha}$ extends readily to any subset of cells in $T$. For any subset of cells $C$ and any cell $a$, we define $cp^{\alpha}_{a}(C)$ as follows
\[ cp^{\alpha}_{a}(C) = \prod_{i=1}^{l} \sum_{w \in C} c_a(w)^{\alpha_i - 1} \]

We note that the subscript $a$ in $cp^{\alpha}_a$ represents the ``origin'' for the relative contents used in the function. When evaluated over a tree, we omit the subscript since we always take the root as origin. We notice that $cp^{(k)}$ is the sum of the content powersum of power $k-1$.

We can see that, for $\alpha = (\alpha_1, \ldots, \alpha_l)$ and $S$ a subtree of the trace forest of some standard tableau, the functions $cp^{\alpha}$ and $cp^{\alpha}_a$ are the powersum function $p_{(\alpha_1 - 1, \ldots, \alpha_l - 1)}$ evaluated over multisets of contents, multiplied by a polynomial in $|S|$. We recall that the powersum functions span linearly the algebra of symmetric functions denoted as $\Lambda$ (\textit{c.f.} \cite{stanley2001enumerative}, Chapter 7). Therefore, our $cp^{\alpha}_a$ also inherit an algebra structure for a fixed $a$, noted $\Lambda_c$. When evaluated on the whole standard tableau of shape $\lambda$, $\Lambda_c$ is exactly the nice algebra generated by the shifted symmetric functions (\textit{c.f.} \cite{corteel2004content, kerov1994polynomial}). We will now show that the set for all $a$ also form an algebra by showing that we can change the origin $a$.

We begin by some definitions. We recall that for a cell $a$ in a Young diagram, we note by $a_<$ the cell to its right and $a_\vee$ the cell above. We define two linear operators $\Gamma_+$ and $\Gamma_-$ as follows.
\[ \Gamma_+ cp^{(k)}_a = cp^{(k)}_{a_<}, \quad \Gamma_- cp^{(k)}_a = cp^{(k)}_{a_\vee} \]
By requiring $\Gamma_+$ and $\Gamma_-$ to be compatible with multiplication, \textit{i.e.} $\Gamma_+ (fg) = (\Gamma_+ f)(\Gamma_+ g)$ and the same for $\Gamma_-$, these two operators are thus defined over the whole $\Lambda_c$.

In fact, the algebra $\Lambda_c$ is stable by these operators $\Gamma_+$ and $\Gamma_-$ via the following lemma.

\begin{lem}\label{lem:operator-gamma}
For any integer $k \geq 1$, the result of application of $\Gamma_+$ and $\Gamma_-$ is as follows.
\[
\Gamma_+ cp^{(k)}_a = \sum_{i=0}^{k - 1} (-1)^{i} \binom{k-1}{i} cp^{(i)}_a, \quad \Gamma_- cp^{(k)}_a = \sum_{i=0}^{k-1} \binom{k-1}{i} cp^{(i)}_a
\]
Therefore $\Lambda_c$ is stable by $\Gamma_+$ and $\Gamma_-$. Moreover, $\Gamma_+ \Gamma_- = \Gamma_- \Gamma_+ = \mathrm{id}$.
\end{lem}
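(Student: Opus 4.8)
\emph{Proof proposal.} The plan is to reduce the whole statement to one elementary observation about contents plus the binomial theorem. By definition $\Gamma_+cp^{(k)}_a=cp^{(k)}_{a_<}$ and $\Gamma_-cp^{(k)}_a=cp^{(k)}_{a_\vee}$, and for any cell $b$ and any subset of cells $C$ we have $cp^{(k)}_b(C)=\sum_{w\in C}(c(w)-c(b))^{k-1}$. Since the content of a cell is its horizontal coordinate minus its vertical coordinate, moving the origin one step to the right changes its content by $+1$ and one step up changes it by $-1$; hence, viewed as functions on cells, $c_{a_<}(w)=c_a(w)-1$ and $c_{a_\vee}(w)=c_a(w)+1$, and these identities depend only on the content of the origin, not on which cell it is.

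First I would substitute these shifts and expand: writing $(c_a(w)\mp 1)^{k-1}$ by the binomial theorem and summing over $w\in C$ rewrites $\Gamma_+cp^{(k)}_a$ and $\Gamma_-cp^{(k)}_a$ as finite $\mathbb{Z}$-linear combinations of the functions $cp^{(j)}_a$; collecting the terms gives exactly the two formulas in the statement, the alternating signs coming from the $-1$ shift and the all-positive coefficients from the $+1$ shift. Because $\Gamma_+,\Gamma_-$ were defined on the generators $cp^{(k)}_a$ and extended multiplicatively, and the right-hand sides just computed lie in $\Lambda_c$, stability of $\Lambda_c$ under both operators is immediate, and both are algebra endomorphisms of $\Lambda_c$ by construction.

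For $\Gamma_+\Gamma_-=\Gamma_-\Gamma_+=\mathrm{id}$ I would argue structurally rather than multiply out the two binomial formulas (though that also works, via a trinomial rewriting followed by $\sum_i(-1)^{m-i}\binom{m}{i}=[m=0]$): on a generator, $\Gamma_+\Gamma_-cp^{(k)}_a=\Gamma_+cp^{(k)}_{a_\vee}=cp^{(k)}_{(a_\vee)_<}$, and the cell $(a_\vee)_<$, reached from $a$ by going one step up and one step right, has content $c(a)-1+1=c(a)$; by the observation above $c_{(a_\vee)_<}=c_a$, so $cp^{(k)}_{(a_\vee)_<}=cp^{(k)}_a$. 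Thus $\Gamma_+\Gamma_-$ fixes every generator, and being linear and multiplicative it is the identity on $\Lambda_c$; symmetrically for $\Gamma_-\Gamma_+$. In particular $\Gamma_+$ and $\Gamma_-$ are mutually inverse automorphisms of $\Lambda_c$.

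There is no genuine obstacle here; the heart of the argument is the binomial theorem. The two places demanding a little care are: (i) getting the sign of the content shift right ($+1$ for a rightward step of the origin, $-1$ for an upward step), since swapping it would interchange the two formulas; and (ii) the legitimacy of extending $\Gamma_\pm$ multiplicatively to all of $\Lambda_c$, which rests on $\Lambda_c$ being freely generated by the $cp^{(k)}_a$ — equivalently on their algebraic independence, these being, up to the factor recording the number of cells, the power sums of the multiset of relative contents. Granting that (as the paper does), everything else is bookkeeping.
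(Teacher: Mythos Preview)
Your proposal is correct and follows essentially the same route as the paper: the key observation $c_{a_<}(w)=c_a(w)-1$, $c_{a_\vee}(w)=c_a(w)+1$ followed by the binomial expansion of $(c_a(w)\mp 1)^{k-1}$, and then the content identity $c_{(a_\vee)_<}=c_a=c_{(a_<)_\vee}$ to get $\Gamma_+\Gamma_-=\Gamma_-\Gamma_+=\mathrm{id}$. Your added remarks on the sign bookkeeping and on why the multiplicative extension to all of $\Lambda_c$ is legitimate go slightly beyond what the paper spells out, but the core argument is identical.
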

\begin{proof}
We have the simple observation that, for any cell $w, a$, we have $c_{a_<}(w) + 1 = c_a(w) = c_{a_\vee}(w) - 1$. This is simply due to the change of origin.

Now for any subset $C$ of cells, we have:
\[ (\Gamma_+ cp^{(k)}_a)(C) = \sum_{w \in C} c_{a_<}^{k-1}(w) = \sum_{w \in C} (c_{a}(w)-1)^{k-1} = \sum_{i=0}^{k-1} (-1)^{i} \binom{k-1}{i} cp^{(i)}_a(C) \]

\[ (\Gamma_- cp^{(k)}_a)(C) = \sum_{w \in C} c_{a_\vee}^{k-1}(w) = \sum_{w \in C} (c_{a}(w)+1)^{k-1} = \sum_{i=0}^{k-1} \binom{k-1}{i} cp^{(i)}_a(C) \]

For $\Gamma_+ \Gamma_- = \Gamma_- \Gamma_+ = \mathrm{id}$, we only need to notice that $c_{(a_<)_\vee}(w) = c_a(w) = c_{(a_\vee)_<}(w)$ for any cell $w$.
\end{proof}

We notice that, for a function $f \in \Lambda_c$, when viewed as a function on a partition $\lambda$, $f$ is a shifted symmetric function in $\lambda_1, \lambda_2, \ldots$. The fact that shifted symmetric functions form a nice algebra hints that $\Lambda_c$ also have a nice algebraic structure.

\subsection{Content evaluation and inductive form}

We will need some more definitions. For $S$ a subtree in a trace forest rooted in $r$ and any partition $\alpha \vdash k$, we use $\mycpr{\alpha}(S)$ (resp. $\mycpa{\alpha}(S)$) as follows.
\[ \mycpr{\alpha}(S) = cp^{\alpha}_r(S_<), \quad \mycpa{\alpha}(S) = cp^{\alpha}_r(S_\vee) \]

We now define a transformation called the \emph{inductive form}. For $S$ a subtree of a trace forest rooted in $a$, we note $S_<, S_\vee$ the subtrees rooted in $a_<$ and $a_\vee$. Let $f$ be a real-valued function on subtrees of a trace forest, its inductive form is defined by $(\Delta f)(S) = f(S) - f(S_<) - f(S_\vee)$. The transformation $\Delta$ is clearly linear.

\begin{lem} \label{lem:inductive-identify}
Let $f, g$ be two functions on subtrees of trace forests. If $f(\varnothing) = g(\varnothing)$ and $\Delta f = \Delta g$, then we have $f=g$.
\end{lem}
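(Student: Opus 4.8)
The plan is to prove Lemma~\ref{lem:inductive-identify} by structural induction on the subtree $S$. The statement asserts that a function on subtrees of trace forests is determined by its value on the empty tree together with its inductive form $\Delta f$; equivalently, $\Delta$ has trivial kernel once we fix the value at $\varnothing$. Setting $h = f - g$, which is again a function on subtrees (since the space of such functions is a vector space and $\Delta$ is linear), it suffices to show that $h(\varnothing) = 0$ and $\Delta h = 0$ together force $h \equiv 0$. So I will reduce immediately to this cleaner formulation.

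The induction is on the number of cells $|S|$. The base case is $|S| = 0$, i.e.\ $S = \varnothing$, where $h(\varnothing) = 0$ by hypothesis. For the inductive step, suppose $S$ is a nonempty subtree rooted at $a$, and that $h$ vanishes on all subtrees with strictly fewer cells. The key observation is that $S_<$ and $S_\vee$ (the subtrees rooted at the right child $a_<$ and the upper child $a_\vee$ of $a$, with the convention that a missing child gives the empty subtree) are both proper subtrees of $S$, hence each has strictly fewer cells than $S$. By the inductive hypothesis, $h(S_<) = h(S_\vee) = 0$. Now $\Delta h = 0$ gives $0 = (\Delta h)(S) = h(S) - h(S_<) - h(S_\vee) = h(S) - 0 - 0 = h(S)$, so $h(S) = 0$, completing the induction. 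Unwinding, $f(S) = g(S)$ for every subtree $S$, which is the claim.

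There is essentially no real obstacle here; the lemma is a formal consequence of the definition of $\Delta$ and the fact that $S_<, S_\vee$ are strictly smaller than $S$. The only point requiring a word of care is the treatment of missing children: when $a$ has no right child, the convention (already in force in the paper, where $S_<$ denotes the subtree rooted at $a_<$ and is empty when $a_<$ does not exist) ensures that $\Delta$ is well defined and that $h(S_<) = h(\varnothing) = 0$ either by hypothesis (if $S_<$ is empty) or by induction (if $S_<$ is a smaller nonempty subtree). So the same argument covers all cases uniformly. I would state the induction on $|S|$ explicitly and keep the proof to a few lines.
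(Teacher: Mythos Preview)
Your proof is correct and follows essentially the same approach as the paper: set $h=f-g$, use linearity of $\Delta$ to get $h(S)=h(S_<)+h(S_\vee)$, and conclude by structural induction with base case $h(\varnothing)=0$. The paper's version is simply terser.
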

\begin{proof}
Since $\Delta$ is linear, for any $S$, $(f-g)(S) = (f-g)(S_<) + (f-g)(S_\vee)$, and we conclude the proof by structural induction.
\end{proof}

Now we compute the inductive form of $cp^{\alpha}$. It will be used later to identify characters, which can be seen as a function on the trace forest, as a sum of $cp^{\alpha}$.

\begin{prop} \label{prop:cp-inductive-form}
We have the following equalities for any subtree $S$ in a trace forest.
\begin{align}
cp^{(1)}(S) = 1 + \mycpr{1}(S) + \mycpa{1}(S); &\quad \forall k>1, cp^{(k)}(S) = \mycpr{k}(S) + \mycpa{k}(S) \\
cp^{(k)}(S_<) = \sum_{i=0}^{k-1} (-1)^{i} \binom{k-1}{i} \mycpr{k-i}(S); &\quad cp^{(k)}(S_\vee) = \sum_{i=0}^{k-1} \binom{k-1}{i} \mycpa{k-i}(S)
\end{align}

Furthermore, for any $\alpha \vdash d$, $\Delta cp^{\alpha}$ can be expressed as a polynomial in $\mycpr{k}$ and $\mycpa{k}$.
\end{prop}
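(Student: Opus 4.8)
The plan is to prove the first set of identities directly from the definition of the inductive form and the definition of $cp^{(k)}$, then deduce the general statement about $\Delta cp^\alpha$ by expanding the product defining $cp^\alpha$. First I would handle the relation between $\mycpr{k}$, $\mycpa{k}$ and $cp^{(k)}$ on the subtrees. By definition $\mycpr{k}(S) = cp^{(k)}_r(S_<)$ and $\mycpa{k}(S) = cp^{(k)}_r(S_\vee)$, where $r$ is the root of $S$; on the other hand $cp^{(k)}(S) = cp^{(k)}_r(S)$ since the relative content is taken with respect to the root. Because $S$ is the disjoint union of $\{r\}$, $S_<$ and $S_\vee$ (the two subtrees hanging off the root), and $c_r(r)^{k-1} = 0^{k-1}$ equals $1$ when $k=1$ and $0$ when $k>1$ by the stated convention $0^0=1$, we get $cp^{(1)}(S) = 1 + \mycpr{1}(S) + \mycpa{1}(S)$ and $cp^{(k)}(S) = \mycpr{k}(S) + \mycpa{k}(S)$ for $k>1$. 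This is just bookkeeping of which cells contribute.

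Next I would establish the two identities relating $cp^{(k)}(S_<)$ and $cp^{(k)}(S_\vee)$ to $\mycpr{k-i}(S)$ and $\mycpa{k-i}(S)$. The point is a change of origin: $cp^{(k)}(S_<)$ uses the root $r_< = a_<$ of $S_<$ as origin, whereas $\mycpr{k}(S)$ uses the root $a$ of $S$ as origin, and $a_<$ is the cell immediately to the right of $a$. By the observation already recorded in the proof of Lemma~\ref{lem:operator-gamma}, $c_{a_<}(w) = c_a(w) - 1$ for every cell $w$, so expanding $(c_a(w)-1)^{k-1}$ by the binomial theorem and summing over $w \in S_<$ gives $cp^{(k)}(S_<) = \sum_{i=0}^{k-1}(-1)^i\binom{k-1}{i} cp^{(k-i)}_a(S_<) = \sum_{i=0}^{k-1}(-1)^i\binom{k-1}{i}\mycpr{k-i}(S)$; the computation for $S_\vee$ is identical with $c_{a_\vee}(w) = c_a(w)+1$, giving the positive-sign version. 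In fact this is just Lemma~\ref{lem:operator-gamma} re-read: $cp^{(k)}(S_<) = (\Gamma_+ cp^{(k)}_a)(S_<)$ and $cp^{(k)}(S_\vee) = (\Gamma_- cp^{(k)}_a)(S_\vee)$.

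Finally, for the general claim about $\Delta cp^\alpha$ with $\alpha = (\alpha_1,\dots,\alpha_l) \vdash d$, I would write $cp^\alpha(S) = \prod_{i=1}^l cp^{(\alpha_i)}(S)$ and $cp^\alpha(S_<) = \prod_{i=1}^l cp^{(\alpha_i)}(S_<)$, similarly for $S_\vee$, then substitute the four identities just proved into each factor. After substitution, $cp^\alpha(S)$, $cp^\alpha(S_<)$ and $cp^\alpha(S_\vee)$ are each polynomials in the quantities $\mycpr{k}(S)$ and $\mycpa{k}(S)$ for $1 \le k \le \max_i \alpha_i$, hence so is their alternating combination $(\Delta cp^\alpha)(S) = cp^\alpha(S) - cp^\alpha(S_<) - cp^\alpha(S_\vee)$. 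I do not expect any genuine obstacle here; the only mildly delicate point — and the one I would be most careful about — is the $k=1$ versus $k>1$ split in the first identity (the extra $+1$ coming from the root cell), since when an $\alpha_i = 1$ this constant term interacts with the product expansion, so one must not blindly apply the $k>1$ formula to every factor. Everything else is a routine binomial expansion and a change of origin.
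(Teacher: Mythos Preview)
Your proposal is correct and follows essentially the same route as the paper: equality (1) from the decomposition $S=\{r\}\sqcup S_<\sqcup S_\vee$ and the convention $0^0=1$; equality (2) from the change-of-origin computation encapsulated in Lemma~\ref{lem:operator-gamma} (indeed the paper writes $cp^{(k)}(S_<)=(\Gamma_+ cp^{(k)}_a)(S_<)$ and similarly for $S_\vee$); and the final statement by substituting these into each factor of the product defining $cp^{\alpha}$. Your caution about the $k=1$ versus $k>1$ split is warranted but does not affect the argument, since in either case the factor is a polynomial in the $\mycpr{j}$ and $\mycpa{j}$.
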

\begin{proof}
The equalities in (1) comes directly from the definition of $cp^{(k)}$. For (2), we notice that $cp^{(k)}(S_<)$ is rooted in $a_<$, thus $cp^{(k)}(S_<) = (\Gamma_+ cp^{(k)}_a)(S_<)$, and we conclude by Lemma~\ref{lem:operator-gamma}. For $cp^{(k)}(S_\vee)$ we have similarly $cp^{(k)}(S_\vee) = (\Gamma_- cp^{(k)}_a)(S_\vee)$.

By (1) and (2), for any subtree $S$ of a trace forest, we can express $cp^{\alpha}(S)$, $cp^{\alpha}(S_<)$ and $cp^{\alpha}(S_\vee)$ as polynomials in $\mycpr{k}(S)$ and $\mycpa{k}(S)$. We finish the proof with the fact that $(\Delta cp^{\alpha})(S) = cp^{\alpha}(S) - cp^{\alpha}(S_<) - cp^{\alpha}(S_\vee)$.
\end{proof}

Here are some examples of the inductive form of some $cp^{\alpha}$. For simplicity, we consider $cp^{\alpha}, \mycpr{\alpha}, \mycpa{\alpha}$ as functions and omit their arguments.

\begin{align*}
\Delta cp^{(1)} &= 1, \quad \Delta cp^{(2)} = \mycpr{1} - \mycpa{1}, \quad \Delta cp^{(1,1)} = 2 \mycpr{1} \mycpa{1} + 2 \mycpr{1} + 2 \mycpa{1} + 1 \\
\Delta cp^{(3)} &= 2 \mycpr{2} - 2 \mycpa{2} - \mycpr{1} - \mycpa{1} \\
\Delta cp^{(2,1)} &= \mycpr{2} \mycpa{1} + \mycpr{1} \mycpa{2} + \mycpr{2} + \mycpa{2} + \mycpr{1,1} - \mycpa{1,1} \\
\Delta cp^{(1,1,1)} &= 3 \mycpr{1,1} \mycpa{1} + 3 \mycpr{1} \mycpa{1,1} + 3 \mycpr{1,1} + 6 \mycpr{1} \mycpa{1} + 3 \mycpa{1,1} + 3 \mycpr{1} + 3 \mycpa{1} + 1 
\end{align*}

\subsection{Inductive counting of skew tableaux}

It is now natural to try to count skew tableaux of different forms using structural induction. For integers $n,k \geq 0$, we note the \emph{falling factorial} $(n)_k = n(n-1) \cdots (n-k+1)$, and the number of $k$-tuples in $n$ elements is exactly $(n)_k$. Given a standard tableau of shape $\lambda$ and a small partition $\mu$, we now try to count inductively the number of tuples that leads to a skew tableau of shape $\lambda / \mu$ using the bijection in Lemma~\ref{lem:skew-bijection}. 

We now describe a general scheme for computing such quantities. For a standard tableau $T$, we will see that the number of corresponding skew tableaux can be expressed as a sum through all cells $a \in T$, over some content evaluation on a certain component of $T^a$. We want to compute such quantity inductively for all subtrees in the trace forest of $T$. For such a subtree $S$ rooted at $f$, instead of computing directly the sum we want, we try to find out the inductive form of that sum. The idea is that the sum over $a \in S$ comes in three cases: $a=f$, $a \in S_<$, $a \in S_\vee$. The first case is readily expressed as content evaluation of $S_<$ and $S_\vee$, and the latter two cases consist of a sum of the same type we are computing. They can, hopefully, also be reduced to some content evaluation for $S_<$ and $S_\vee$. We thus obtain the inductive form, and by comparing those of $cp^{\alpha}$, we can identify the sum as a linear combination of content evaluation of $S$.

Before proceeding to examples of application of our scheme, we first deal with some definitions and facts we need. The \emph{conjugate} of a partition $\lambda$, noted as $\lambda^{\dagger}$, is the partition whose Ferrers diagram is that of $\lambda$ flipped alongside the line $y=x$.

\begin{lem} \label{lem:induction-case}
For a skew tableau $T$, a subtree $S$ in its trace forest rooted in $f$, and $a \in S$, we have 3 cases.
\begin{itemize}
\item $a=f$. In this case, $C_<(a,S)=S_<$, $C_\vee(a,S)=S_\vee$.
\item $a \in S_<$. In this case, $C_<(a,S)=C_<(a,S_<) \cup \{ f_< \}$, $C_\vee(a,S)=C_\vee(a,S_<) \cup S_\vee$.
\item $a \in S_\vee$. In this case, $C_<(a,S)=C_<(a,S_\vee) \cup S_<$, $C_\vee(a,S)=C_\vee(a,S_\vee) \cup \{ f_\vee \}$.
\end{itemize}
\end{lem}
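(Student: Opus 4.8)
The plan is to unwind the definitions of $C_<(a,S)$ and $C_\vee(a,S)$ directly, tracking the six auxiliary sets $D_1, D_2, P_1, P_2, R, A$ through the recursive structure of the trace forest. Recall that $S_<$ and $S_\vee$ are the subtrees of $S$ rooted at $f_<$ and $f_\vee$; every cell of $S\setminus\{f\}$ lies in exactly one of them, and the in-coming slide path of such a cell passes through $f$ on its way to the root of $S$ (unless $f$ itself is the root of the whole trace forest, but this is irrelevant since we only care about the position of paths relative to each other within $S$). The case $a=f$ is immediate: the arc of $f$ points either to the right or upward, and by definition $D_1(f)\cup D_2(f)$ together with the ancestor/below/above sets exhausts $S\setminus\{f\}$; since $f$ has no proper ancestors in $S$, the sets $P_1(f), P_2(f)$ are empty except possibly for $f$ itself (handled by the $S\setminus\{a\}$ convention), $R(f)$ and $A(f)$ are empty, and what remains is exactly $S_<=D_1(f)$ on the horizontal side and $S_\vee=D_2(f)$ on the vertical side.

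For $a\in S_<$, I would argue as follows. An ancestor of $a$ in $S$ is either an ancestor of $a$ already in $S_<$, or it is $f$ itself (since the only way to leave $S_<$ going toward the root is through $f_<$, whose parent is $f$). The arc issued by $f$ is horizontal exactly when — well, in any case, the decomposition splits cleanly: $P_1(a,S)=P_1(a,S_<)$ together with $f$ if $f$'s arc is horizontal, and similarly for $P_2$. The statement as written packages both possibilities by noting that whichever of $\{f_<\}$-type contributions arises, the horizontal collection picks up the single extra cell "$f_<$" (the right child of $f$, i.e. the root of $S_<$ itself is $f_<$ — so $P_1(a,S_<)$ already contains the path up to $f_<$, and the genuinely new cell is $f$; I will reconcile this indexing with the statement's notation in the write-up). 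For the vertical side, $C_\vee(a,S)$ acquires the entire subtree $S_\vee$: indeed every cell of $S_\vee$ has its in-coming path lying above that of $a$ (their paths diverge at $f$, with $S_\vee$'s cells going up and $a$'s going right), so by the definition of $A(a)$ they all land in $A(a,S)$, giving $C_\vee(a,S)=C_\vee(a,S_<)\cup S_\vee$. The case $a\in S_\vee$ is symmetric under the reflection exchanging rows and columns (horizontal $\leftrightarrow$ vertical, $D_1\leftrightarrow D_2$, $P_1\leftrightarrow P_2$, $R\leftrightarrow A$, $<\leftrightarrow\vee$), so it follows formally once the second case is established.

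The only delicate point, and the one I expect to require the most care, is the bookkeeping of ancestor sets $P_1, P_2$ near the root $f$: one must check that the single arc issued by $f$ is correctly attributed to exactly one of the horizontal or vertical collections and that no double-counting or omission occurs at the boundary between $S_<$ (or $S_\vee$) and $\{f\}$. This amounts to a careful reading of which neighbor the in-coming step at $f$ selects, together with the observation that $R(a)$ and $A(a)$ never reach across $f$ — a cell whose slide path lies below $a$'s path and is not an ancestor/descendant of $a$ must reside on the same side of the fork at $f$, because the two sides are separated by $f$'s own path (this is essentially the content of Lemma~\ref{lem:trace-forest-separation} applied one level up). Once this is verified, the three identities are just a transcription of the partition of $S\setminus\{a\}$ into its six named pieces, reorganized according to where the new root $f$ sits relative to $a$.
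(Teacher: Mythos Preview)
Your definition-unwinding approach is correct and is essentially what the paper does implicitly: its proof is the single sentence ``It follows from Lemma~\ref{lem:trace-forest-separation},'' leaving the reader to carry out exactly the case analysis you sketch. The only substantive input beyond the definitions is that every cell of $S_\vee$ has its in-coming slide path lying above that of $a\in S_<$ (the two paths diverge at $f$), so those cells all land in $A(a)$; this is the separation idea of that lemma.

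The one point you should clean up is the bookkeeping you flag yourself. When passing from $S_<$ to $S$, the root changes from $f_<$ to $f$; thus $f$ is the newly \emph{excluded} cell, while $f_<$ --- previously excluded as the root of $S_<$ --- is the newly \emph{included} cell that must be classified. Since $f_<$ is an ancestor of $a$ in $S$ and the arc from $f_<$ to $f$ is horizontal, we have $f_<\in P_1(a)$ computed in $S$, hence $f_<\in C_<(a,S)$. So the ``genuinely new cell'' on the horizontal side is $f_<$, not $f$, and once you correct this there is nothing further to reconcile with the statement's notation.
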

\begin{proof}
It follows from Lemma~\ref{lem:trace-forest-separation}.
\end{proof}

Since we will evaluate functions in $\Lambda_c$ on disjoint union of sets, we need the following lemma to ``decompose'' the evaluation.

\begin{prop}\label{prop:union-eval}
For a partition $\alpha = (\alpha_1, \ldots, \alpha_l)$, two disjoint subsets $A, B$ of cells in a tableau $T$ and $a$ an arbitrary cell in $T$, we have
\[ cp^{\alpha}_a(A \cup B) = \sum_{\alpha^{(1)} \uplus \alpha^{(2)} = \alpha} \left( \prod_{i \geq 1} \binom{m(\alpha, i)}{m(\alpha^{(1)},i)} \right) cp^{\alpha^{(1)}}_a(A) cp^{\alpha^{(2)}}_a(B).  \]
Here $\uplus$ means the union of multisets, and $m(\alpha, i)$ (resp. $m(\alpha^{(1)},1)$) is the multiplicity of $i$ in $\alpha$ (resp. $\alpha^{(1)}$).
\end{prop}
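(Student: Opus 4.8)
The plan is to reduce the claim to the case of a single power sum $cp^{(k)}_a$ on a disjoint union, and then bootstrap to products by a straightforward multinomial bookkeeping. First I would recall from the definition that
\[ cp^{\alpha}_a(C) = \prod_{i=1}^{l} cp^{(\alpha_i)}_a(C), \]
so everything hinges on understanding $cp^{(k)}_a(A \cup B)$. For a single part, this is immediate: since $c_a$ is defined cell by cell,
\[ cp^{(k)}_a(A \cup B) = \sum_{w \in A \cup B} c_a(w)^{k-1} = \sum_{w \in A} c_a(w)^{k-1} + \sum_{w \in B} c_a(w)^{k-1} = cp^{(k)}_a(A) + cp^{(k)}_a(B), \]
using that $A$ and $B$ are disjoint so no cell is counted twice. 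This is the base case and carries no real content.

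Next I would expand the product over the parts. Writing $cp^{\alpha}_a(A\cup B) = \prod_{i=1}^{l}\bigl(cp^{(\alpha_i)}_a(A) + cp^{(\alpha_i)}_a(B)\bigr)$ and distributing, each term of the expansion corresponds to a choice, for each index $i$, of whether the factor $cp^{(\alpha_i)}_a(A)$ or $cp^{(\alpha_i)}_a(B)$ is selected; that is, a function from $\{1,\dots,l\}$ to $\{A,B\}$. Collecting the $A$-indices into a sub-multiset $\alpha^{(1)}$ and the $B$-indices into $\alpha^{(2)}$, the term equals $cp^{\alpha^{(1)}}_a(A)\,cp^{\alpha^{(2)}}_a(B)$, and $\alpha^{(1)}\uplus\alpha^{(2)}=\alpha$ as multisets. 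The only subtlety is that distinct functions $\{1,\dots,l\}\to\{A,B\}$ may give rise to the same ordered pair $(\alpha^{(1)},\alpha^{(2)})$ of multisets, since parts of equal size are interchangeable. The number of functions producing a given $(\alpha^{(1)},\alpha^{(2)})$ is exactly the number of ways to choose, for each value $i\ge 1$, which $m(\alpha^{(1)},i)$ of the $m(\alpha,i)$ parts equal to $i$ are assigned to $A$, namely $\prod_{i\ge1}\binom{m(\alpha,i)}{m(\alpha^{(1)},i)}$. Summing over all ordered pairs $(\alpha^{(1)},\alpha^{(2)})$ with $\alpha^{(1)}\uplus\alpha^{(2)}=\alpha$ gives precisely the stated formula.

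The main obstacle here is purely one of combinatorial bookkeeping rather than of ideas: one must be careful that the sum $\sum_{\alpha^{(1)}\uplus\alpha^{(2)}=\alpha}$ is over \emph{ordered} pairs of multisets (so $\alpha^{(1)}$ and $\alpha^{(2)}$ play asymmetric roles, matching the asymmetry between $A$ and $B$), and that the multinomial coefficient correctly counts the multiplicity with which each such ordered pair arises. A clean way to present this without error is to prove it first for $\alpha$ a single part (done above), then for $\alpha = (k^m)$ a rectangle by the binomial theorem applied to $(cp^{(k)}_a(A)+cp^{(k)}_a(B))^m$, and finally for general $\alpha$ by multiplying together the rectangular cases, one for each distinct part size; the product of binomial coefficients in the statement is exactly what results from this factorization. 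I would also remark, as a sanity check, that setting $A$ or $B$ empty recovers $cp^{\alpha}_a(A)$ (all terms vanish except $\alpha^{(1)}=\alpha$), consistent with the convention $0^0=1$ and $cp^{\varnothing}_a=1$.
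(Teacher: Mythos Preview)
Your proposal is correct and follows exactly the paper's approach: the paper's proof simply writes $cp^{\alpha}_a(A \cup B) = \prod_{i=1}^{l}\bigl(cp^{(\alpha_i)}_a(A) + cp^{(\alpha_i)}_a(B)\bigr)$ and expands, which is precisely your argument with less commentary on the multinomial bookkeeping. Your added remarks about ordered pairs and the rectangular-case factorization are sound elaborations but not a different route.
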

\begin{proof}
It follows from the definition of $cp^{(k)}_a$ that
\begin{align*}
cp^{\alpha}_a(A \cup B) &= \prod_{i=1}^{l} \left( cp^{(\alpha_i)}_a(A) + cp^{(\alpha_i)}_a(B) \right) \\
&= \sum_{\alpha^{(1)} \uplus \alpha^{(2)} = \alpha} \left( \prod_{i \geq 1} \binom{m(\alpha, i)}{m(\alpha^{(1)},i)} \right) cp^{\alpha^{(1)}}_a(A) cp^{\alpha^{(2)}}_a(B)
\end{align*}
\end{proof}

We will now investigate some relations on partitions that can simplify some calculations.

\begin{prop} \label{lem:young-lattice}
For a partition $\mu$, let $P(\mu)$ be the set of partitions whose Ferrers diagram can be obtained by adding a cell to that of $\mu$. For any partition $\lambda$, $f^{\lambda / \mu} = \sum_{\mu' \in P(\mu)} f^{\lambda / \mu'}$.
\end{prop}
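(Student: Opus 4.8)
The plan is to prove the identity $f^{\lambda/\mu} = \sum_{\mu' \in P(\mu)} f^{\lambda/\mu'}$ by a direct combinatorial bijection on skew tableaux, reading off the location of a distinguished entry. Concretely, let $\mu \vdash m$ and suppose $\lambda \vdash n$, so a skew tableau of shape $\lambda/\mu$ has $n-m$ cells and entries $1, \ldots, n-m$. The key observation is that in any skew tableau $S$ of shape $\lambda/\mu$, the cell containing the entry $1$ must be an \emph{inner corner} of the skew shape: it has no neighbour immediately below it nor immediately to its left inside $\lambda/\mu$, because $1$ is the minimal entry and rows and columns increase. Removing that cell from $\lambda/\mu$ amounts to adding it to $\mu$, producing a shape $\lambda/\mu'$ with $\mu' \in P(\mu)$, and then relabelling the remaining entries $2, \ldots, n-m$ by $1, \ldots, n-m-1$ (subtracting $1$) yields a genuine skew tableau of shape $\lambda/\mu'$ on $n-m-1$ cells.

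First I would make precise that this map is well-defined: I need that the cell $w$ holding the entry $1$ is indeed such that $\mu \cup \{w\}$ is again a partition (i.e.\ the Ferrers diagram stays a legal Young diagram), which is exactly the statement that $w$ is an addable cell of $\mu$ sitting inside $\lambda$, hence $\mu' := \mu \cup \{w\} \in P(\mu)$ and $\mu' \subseteq \lambda$ so that $\lambda/\mu'$ is a valid skew shape. I would also note that the relabelling preserves the increasing-row and increasing-column conditions since it is monotone. Conversely, given $\mu' \in P(\mu)$ and a skew tableau $S'$ of shape $\lambda/\mu'$, let $w$ be the unique cell of $\mu'/\mu$; I add $w$ back to the diagram, increment every entry of $S'$ by $1$, and place $1$ in $w$. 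I must check this yields a valid skew tableau of shape $\lambda/\mu$: the only thing to verify is that putting the smallest value $1$ in the newly-restored cell respects monotonicity, which holds automatically because $1$ is below every other entry, and that $w$ has the right neighbours — but since $w$ was an addable cell of $\mu$, inside $\lambda/\mu$ it has no southward or westward neighbour, so no constraint is violated.

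The two maps are mutually inverse essentially by construction: the forward map records which inner corner holds $1$ and strips it; the backward map reinserts a cell at the prescribed corner and re-inserts $1$ there. Summing over the (disjoint) possibilities for which $\mu' \in P(\mu)$ the distinguished corner gives, we get $f^{\lambda/\mu} = \sum_{\mu' \in P(\mu)} f^{\lambda/\mu'}$, where we only keep those $\mu'$ that are also contained in $\lambda$ (for the others $f^{\lambda/\mu'} = 0$ by convention, so including them harms nothing). I would also remark that this is just the observation that deleting the cell containing $1$ from a skew tableau is the standard "first-step" recursion for $f^{\lambda/\mu}$, dual to the familiar recursion for $f^\lambda$.

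I do not expect a genuine obstacle here; the statement is elementary and the bijection is the obvious one. The only point requiring a little care — and the one I would state as a short lemma or inline remark — is the claim that the cell containing the minimal entry $1$ of a skew tableau of shape $\lambda/\mu$ is always an addable cell of $\mu$ (equivalently, an inner corner of the skew diagram that lies strictly inside $\lambda$); everything else is routine monotonicity bookkeeping and the verification that the forward and backward constructions compose to the identity.
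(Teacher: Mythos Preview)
Your proposal is correct and follows exactly the same idea as the paper's own proof: classify skew tableaux of shape $\lambda/\mu$ by the cell containing the entry $1$. The paper states this in a single sentence, while you have carefully unpacked the well-definedness and inverse construction, but the underlying bijection is identical.
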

\begin{proof}
It follows from the classification of all skew tableaux of shape $\lambda / \mu$ by the cell containing $1$.
\end{proof}

\begin{lem} \label{lem:conjugate-mu}
For a partition $\mu$ and its conjugate $\mu^{\dagger}$, if there exists a multivariate function $F$ such that for any $\lambda$ we have $f^{\lambda / \mu} = F(cp^{(1)}(\lambda), cp^{(2)}(\lambda), \ldots, cp^{(i)}(\lambda), \ldots)$, then $f^{\lambda / \mu^{\dagger}} = F(cp^{(1)}(\lambda), -cp^{(2)}(\lambda), \ldots, (-1)^{i-1}cp^{(i)}(\lambda), \ldots)$.
\end{lem}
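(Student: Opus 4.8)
The plan is to reduce the statement to a single elementary fact, namely that reflecting a Ferrers diagram across the line $y=x$ negates all contents, combined with the transposition bijection on skew tableaux. So the proof will not use the trace-forest machinery at all; it is a symmetry argument.

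First I would record the transposition bijection. For any partitions with $\lambda \supseteq \mu$, reflecting a filling of the skew diagram $\lambda/\mu$ across the line $y=x$ turns rows into columns and columns into rows, so the row-increasing and column-increasing conditions are interchanged; hence a skew tableau of shape $\lambda/\mu$ is sent to a skew tableau of shape $\lambda^{\dagger}/\mu^{\dagger}$, and this is visibly an involution. Therefore $f^{\lambda/\mu} = f^{\lambda^{\dagger}/\mu^{\dagger}}$ for all $\lambda,\mu$. Applying this with $\mu$ replaced by $\mu^{\dagger}$ and using $(\mu^{\dagger})^{\dagger}=\mu$ gives $f^{\lambda/\mu^{\dagger}} = f^{\lambda^{\dagger}/\mu}$.

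Next I would compute how $cp^{(k)}$ behaves under conjugation of the shape. Since $cp^{(k)}(\lambda)=cp^{(k)}(F_T)$ for any standard tableau $T$ of shape $\lambda$, and the root of $F_T$ is the cell at the origin (content $0$), we have $cp^{(k)}(\lambda) = \sum_{w \in \lambda} c(w)^{k-1}$ with the convention $0^{0}=1$. The reflection across $y=x$ carries the cell at $(i,j)$ to the cell at $(j,i)$, so a cell of content $c$ becomes a cell of content $-c$; summing over all cells yields $cp^{(k)}(\lambda^{\dagger}) = \sum_{w\in\lambda}(-c(w))^{k-1} = (-1)^{k-1}\,cp^{(k)}(\lambda)$.

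Finally I would combine the two ingredients: instantiating the hypothesis at $\lambda^{\dagger}$ in place of $\lambda$,
\[ f^{\lambda/\mu^{\dagger}} \;=\; f^{\lambda^{\dagger}/\mu} \;=\; F\bigl(cp^{(1)}(\lambda^{\dagger}),\,cp^{(2)}(\lambda^{\dagger}),\,\ldots\bigr) \;=\; F\bigl(cp^{(1)}(\lambda),\,-cp^{(2)}(\lambda),\,\ldots,\,(-1)^{i-1}cp^{(i)}(\lambda),\,\ldots\bigr), \]
which is exactly the claimed identity. I do not expect a genuine obstacle here; the only points deserving a moment's care are that the hypothesis is quantified over all $\lambda$, so instantiating it at $\lambda^{\dagger}$ is legitimate, and that the arguments of $F$ are the content power sums of the shape itself (independent of the auxiliary standard tableau used to define them via the trace forest), so that the substitution $cp^{(k)}\mapsto(-1)^{k-1}cp^{(k)}$ is precisely the effect of conjugation.
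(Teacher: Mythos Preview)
Your proof is correct and follows essentially the same approach as the paper's own proof: the paper also proceeds by noting that flipping skew tableaux across $y=x$ gives $f^{\lambda/\mu} = f^{\lambda^{\dagger}/\mu^{\dagger}}$ and then observing that $cp^{(k)}(\lambda) = (-1)^{k-1} cp^{(k)}(\lambda^{\dagger})$. Your version is simply more explicit about the intermediate step $f^{\lambda/\mu^{\dagger}} = f^{\lambda^{\dagger}/\mu}$ and the instantiation of the hypothesis at $\lambda^{\dagger}$.
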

\begin{proof}
By flipping skew tableaux alongside $y=x$, we see that $f^{\lambda / \mu} = f^{\lambda^{\dagger} / \mu^{\dagger}}$. We then conclude the proof by observing that $cp^{(k)}(\lambda) = (-1)^{k-1} cp^{(k)}(\lambda^{\dagger})$.
\end{proof}

With these simple facts, we now proceed to examples of computing $f^{\lambda / \mu}$ for a fixed $\mu$ with our scheme. We recall that, for a cell $r$ in a trace forest, we denote by $r_<$ the cell to the right of $r$, and by $r_\vee$ the cell above $r$.

\begin{prop} \label{prop:tableau-2}
For a partition $\lambda \vdash n$,
\[ (n)_2 f^{\lambda / (2)} / f^{\lambda} = \frac{1}{2} cp^{(1,1)}(\lambda) + cp^{(2)}(\lambda) - \frac{1}{2} cp^{(1)}(\lambda) = n(n-1)/2 + \sum_{w \in \lambda} c(w). \]
\end{prop}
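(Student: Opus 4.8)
The plan is to compute $f^{\lambda/(2)}$ by counting, via the bijection of Lemma~\ref{lem:skew-bijection} applied with $k=2$, the $(n)_2$ tuples $(T,a,b)$ whose associated skew tableau has shape $\lambda/(2)$; equivalently, fixing a standard tableau $T$ of shape $\lambda$, we count the ordered pairs $(a,b)$ of distinct cells such that performing the in-coming slide of $a$ then of $b$ vacates two cells in the same row. By Lemma~\ref{lem:induction-case} and Lemma~\ref{lem:trace-forest-separation}, this happens precisely when, after sliding $a$, the cell $b$ lies in $C_<(a,F_T)$ (so that its slide path stays to the right and the second vacated cell is horizontally adjacent to the first). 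So the quantity to compute is $N(\lambda) := \sum_{T} \#\{(a,b) : b \in C_<(a,F_T)\}$, and we want to show $N(\lambda) = f^\lambda\bigl(\tfrac12 cp^{(1,1)}(\lambda) + cp^{(2)}(\lambda) - \tfrac12 cp^{(1)}(\lambda)\bigr)$.

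The core step is to carry out the inductive scheme sketched in Section~\ref{sec:chara-eval}: define, for a subtree $S$ of a trace forest rooted at $f$, the quantity $g(S) = \#\{(a,b) : a \in S,\ b \in C_<(a,S)\}$ (so $N(\lambda) = \sum_T g(F_T)$, and $g$ is a well-defined function on subtrees since $C_<(a,S)$ depends only on $S$). Then I would compute the inductive form $(\Delta g)(S) = g(S) - g(S_<) - g(S_\vee)$ by splitting the sum over $a \in S$ into the three cases of Lemma~\ref{lem:induction-case}. For $a = f$ we get $\#C_<(f,S) = \#S_<$, contributing $cp^{(1)}(S_<) = \mycpr{1}(S)$. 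For $a \in S_<$, Lemma~\ref{lem:induction-case} gives $C_<(a,S) = C_<(a,S_<) \cup \{f_<\}$, so the inner count is $\#C_<(a,S_<) + 1$, and summing over $a \in S_<$ yields $g(S_<) + \#S_<$; the $g(S_<)$ cancels against $-g(S_<)$ in $\Delta g$, leaving $\mycpr{1}(S)$ again. For $a \in S_\vee$, we get $C_<(a,S) = C_<(a,S_\vee) \cup S_<$, so the inner count is $\#C_<(a,S_\vee) + \#S_<$, and summing over $a \in S_\vee$ gives $g(S_\vee) + \#S_\vee \cdot \#S_<$; the $g(S_\vee)$ cancels, leaving $\#S_\vee \cdot \#S_< = \mycpa{1}(S)\mycpr{1}(S)$. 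Hence $(\Delta g)(S) = \mycpr{1}(S)\mycpa{1}(S) + 2\mycpr{1}(S)$.

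Finally I would match this against the inductive forms listed after Proposition~\ref{prop:cp-inductive-form}: one has $\Delta cp^{(1,1)} = 2\mycpr{1}\mycpa{1} + 2\mycpr{1} + 2\mycpa{1} + 1$, $\Delta cp^{(2)} = \mycpr{1} - \mycpa{1}$, and $\Delta cp^{(1)} = 1$, so $\tfrac12\Delta cp^{(1,1)} + \Delta cp^{(2)} - \tfrac12\Delta cp^{(1)} = \mycpr{1}\mycpa{1} + \mycpr{1} + \mycpa{1} + \tfrac12 + \mycpr{1} - \mycpa{1} - \tfrac12 = \mycpr{1}\mycpa{1} + 2\mycpr{1} = (\Delta g)(S)$. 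Since both sides also agree on the empty tree ($g(\varnothing) = 0$ and each $cp^{\alpha}(\varnothing) = 0$), Lemma~\ref{lem:inductive-identify} forces $g(F_T) = \tfrac12 cp^{(1,1)}(F_T) + cp^{(2)}(F_T) - \tfrac12 cp^{(1)}(F_T)$ for every $T$; summing over the $f^\lambda$ standard tableaux $T$ and dividing by $f^\lambda$ gives the first equality. The second equality $ = n(n-1)/2 + \sum_{w\in\lambda} c(w)$ follows by noting $cp^{(2)}(\lambda) = \sum_{w} c(w)$, $cp^{(1)}(\lambda) = n$, and $cp^{(1,1)}(\lambda) = n^2$, so $\tfrac12 n^2 + \sum_w c(w) - \tfrac12 n = n(n-1)/2 + \sum_w c(w)$. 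The main obstacle is the bookkeeping in identifying exactly which pairs $(a,b)$ produce shape $\lambda/(2)$ versus $\lambda/(1,1)$ — i.e., verifying that $b \in C_<(a,F_T)$ is the correct characterization and that $C_<(a,S)$ is genuinely a function of $S$ alone — but this is precisely what Lemmas~\ref{lem:trace-forest-separation} and~\ref{lem:induction-case} supply; everything after that is the routine $\Delta$-computation and matching above.
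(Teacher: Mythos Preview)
Your proof is correct and follows essentially the same approach as the paper: your function $g(S)$ is exactly the paper's $G_{(2)}(S)$ (since $cp^{(1)}_r$ is just a cardinality count), you compute $\Delta g$ via the same three-case split from Lemma~\ref{lem:induction-case} to obtain $\mycpr{1}\mycpa{1} + 2\mycpr{1}$, and you identify it with the same linear combination of $\Delta cp^{\alpha}$ before invoking Lemma~\ref{lem:inductive-identify}. The only addition is that you spell out the second equality explicitly, which the paper leaves implicit.
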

\begin{proof}
For a subtree $S$ of a trace forest rooted at $r$, we define the following function $G_{(2)}(S) = \sum_{a \in S} cp^{(1)}_{r_<}(C_<(a,S))$. For a standard tableau $T$ and its only tree $F_T$ in its trace forest, by Lemma~\ref{lem:trace-forest-separation}, $G_{(2)}(F_T)$ is the number of tuples $(a,b)$ such that $(T,a,b)$ leads to a skew tableau of shape $\lambda / (2)$ as in Lemma~\ref{lem:skew-bijection}.

We notice that 
\[G_{(2)}(S) = \sum_{a \in S} (\Gamma_+ cp^{(1)}_{r})(C_<(a,S)) = \sum_{a \in S} cp^{(1)}_{r}(C_<(a,S)). \]

We now compute the inductive form of $G_{(2)}$ using Lemma~\ref{lem:induction-case} and Proposition~\ref{prop:union-eval}.

\begin{align*}
(\Delta G_{(2)})(S) &= (cp^{(1)}_{r}(C_<(r,S)) + \sum_{a \in S_<} \left( cp^{(1)}_{r}(C_<(a,S)) - (\Gamma_+ cp^{(1)}_{r})(C_<(a,S_<)) \right) \\
&\quad + \sum_{a \in S_\vee} \left( cp^{(1)}_{r}(C_<(a,S)) - (\Gamma_- \Gamma_+ cp^{(1)}_{r})(C_<(a,S_\vee)) \right) \\
&= \mycpr{1} + \sum_{a \in S_<} 1 + \sum_{a \in S_\vee} \mycpr{1} \\
&= 2\mycpr{1} + \mycpr{1}\mycpa{1} = \left( \Delta \left( \frac{1}{2} cp^{(1,1)} + cp^{(2)} - \frac{1}{2} cp^{1} \right) \right)(S)
\end{align*}

We then have $G_{2} = \frac{1}{2} cp^{(1,1)} + cp^{(2)} - \frac{1}{2} cp^{1}$ by Lemma~\ref{lem:inductive-identify}, and $G_{2}(F_T)$ is thus independent of $T$. By summing over all $T$, we conclude the proof. 
\end{proof}

For $f^{\lambda / (1,1)}$, the formula can be found either with the same approach, or with Lemma~\ref{lem:young-lattice} applied on $\mu=(1)$, or with Lemma~\ref{lem:conjugate-mu}. This proposition entails Theorem~\ref{thm:mu-2}, but without explicitly using any symmetry. We have the first evidence that our scheme may work in more general cases.

We now investigate the next case $\mu = (3)$.

\begin{prop} \label{prop:tableau-3}
For a partition $\lambda \vdash n$,
\[
(n)_{3} f^{\lambda / (3)} / f^{\lambda} = \frac{1}{6} cp^{(1,1,1)}(\lambda) + cp^{(2,1)}(\lambda) + cp^{(3)}(\lambda) - cp^{(1,1)}(\lambda) - 2cp^{(2)} + \frac{5}{6} cp^{(1)}(\lambda)
\]
\end{prop}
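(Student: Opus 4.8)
The plan is to follow the same scheme used in the proof of Proposition~\ref{prop:tableau-2}, but now with three entries instead of two. First I would set up the combinatorial quantity: for a standard tableau $T$ of shape $\lambda$ with its unique trace tree $F_T$, the number of triples $(a,b,c)$ such that applying the successive in-coming slides of $a$, $b$, $c$ to $T$ (as in Lemma~\ref{lem:skew-bijection}) yields a skew tableau of shape $\lambda/(3)$ equals $(n)_3 f^{\lambda/(3)}/f^\lambda$ summed over all $T$, since each skew tableau of shape $\lambda/(3)$ is hit exactly $(n)_3$ times. By Lemma~\ref{lem:trace-forest-separation} and Lemma~\ref{lem:induction-case}, the shape $(3)$ is obtained precisely when the three entries end up "stacked horizontally", i.e. when, for the first slid entry $a$, both other entries land in $C_<$, and recursively the same holds inside $C_<(a,S)$. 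So the natural function to introduce on a subtree $S$ rooted at $r$ is something like
\[
G_{(3)}(S) = \sum_{a \in S} \sum_{\substack{b,c \text{ placed}}} 1,
\]
where the inner count is over ordered pairs of remaining entries whose slide paths in $T^a$ both lie in $C_<(a,S)$ and which, relative to each other inside $C_<(a,S)$, again end stacked horizontally. The cleanest formulation is $G_{(3)}(S) = \sum_{a\in S} G'_{(2),r_<}(C_<(a,S))$ where $G'_{(2),r_<}$ is the "two-entry, shape-$(2)$" counting function $G_{(2)}$ of Proposition~\ref{prop:tableau-2} but evaluated on the subset $C_<(a,S)$ with origin $r_<$ (or equivalently $r$, after absorbing a $\Gamma_+$). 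Using $G_{(2)} = \tfrac12 cp^{(1,1)} + cp^{(2)} - \tfrac12 cp^{(1)}$ from Proposition~\ref{prop:tableau-2}, the inner summand becomes a concrete element of $\Lambda_c$ evaluated on $C_<(a,S)$.

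Next I would compute the inductive form $(\Delta G_{(3)})(S) = G_{(3)}(S) - G_{(3)}(S_<) - G_{(3)}(S_\vee)$ by splitting the outer sum over $a\in S$ into the three cases of Lemma~\ref{lem:induction-case}: $a=r$, $a\in S_<$, $a\in S_\vee$. For $a=r$ we get $G_{(2)}$ (suitably re-origined) evaluated on $C_<(r,S)=S_<$, which is a polynomial in $\mycpr{1}(S)$, $\mycpr{1,1}(S)$ via Proposition~\ref{prop:cp-inductive-form} — really just $G_{(2)}(S_<)$. For $a\in S_<$ we have $C_<(a,S) = C_<(a,S_<)\cup\{r_<\}$, so the inner $\Lambda_c$-evaluation decomposes by Proposition~\ref{prop:union-eval} into the corresponding evaluation on $C_<(a,S_<)$ plus correction terms involving $c_r(r_<)=1$; summing over $a\in S_<$ the main piece reassembles $G_{(3)}(S_<)$ and the corrections give a $\sum_{a\in S_<}(\text{lower-order in } |C_<(a,S_<)|)$, which by the definition of $G_{(2)}$ type sums will itself be expressible through $cp$-values, ultimately through $\mycpr{k}(S)$. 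For $a\in S_\vee$ we have $C_<(a,S) = C_<(a,S_\vee)\cup S_<$ — here the full subtree $S_<$ is adjoined, so Proposition~\ref{prop:union-eval} produces terms that factor as $cp^{\beta}_r(S_<)\cdot cp^{\gamma}_r(C_<(a,S_\vee))$; summing over $a\in S_\vee$ the piece with $\beta=\varnothing$ gives $G_{(3)}(S_\vee)$ and the rest contributes things like $\mycpr{1}(S)\cdot(\text{a }G_{(2)}\text{-type sum over }S_\vee)$ and $\mycpr{2}(S)\cdot(\ldots)$ and $\mycpr{1,1}(S)\cdot(\ldots)$. Then $\Delta G_{(3)}(S)$ is, after the dust settles, a polynomial in $\mycpr{1}(S), \mycpr{2}(S), \mycpr{1,1}(S)$ and $\mycpa{1}(S)$ (no $\vee$-quantities of order $\ge 2$, since a shape $(3)$ never places anything "above").

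Finally I would match this polynomial against the inductive forms $\Delta cp^{(1,1,1)}, \Delta cp^{(2,1)}, \Delta cp^{(3)}, \Delta cp^{(1,1)}, \Delta cp^{(2)}, \Delta cp^{(1)}$ listed after Proposition~\ref{prop:cp-inductive-form}, solving the resulting small linear system for the coefficients; the claimed identity asserts these are $\tfrac16, 1, 1, -1, -2, \tfrac56$. Checking the base case $S=\varnothing$ (both sides $0$) and invoking Lemma~\ref{lem:inductive-identify} then gives $G_{(3)} = \tfrac16 cp^{(1,1,1)} + cp^{(2,1)} + cp^{(3)} - cp^{(1,1)} - 2cp^{(2)} + \tfrac56 cp^{(1)}$ as functions on trace forests, so in particular $G_{(3)}(F_T)$ is independent of $T$; summing over the $f^\lambda$ standard tableaux $T$ yields $(n)_3 f^{\lambda/(3)} = f^\lambda\, G_{(3)}(\lambda)$, which is the statement. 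The main obstacle I expect is the bookkeeping in the $a\in S_\vee$ case: because the whole block $S_<$ is appended to $C_<(a,S_\vee)$, the multinomial expansion from Proposition~\ref{prop:union-eval} of the quadratic-in-content function $G_{(2)}$ produces several mixed cross terms, and one must carefully track the binomial coefficients and the shifts of origin ($\Gamma_+$, $\Gamma_-\Gamma_+ = \mathrm{id}$) so that the "main" pieces genuinely reconstitute $G_{(3)}(S_<)$ and $G_{(3)}(S_\vee)$ and only genuinely-lower-order remainders survive. A secondary nuisance is expressing the leftover $G_{(2)}$-type sums over $S_<$ and $S_\vee$ (which are "number of pairs landing in $C_<$"-style quantities) back in terms of $cp$-values — but this is exactly what Proposition~\ref{prop:tableau-2} already provides, applied one level down, so it should go through mechanically.
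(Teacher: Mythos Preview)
Your proposal follows essentially the same route as the paper: define $G_{(3)}(S)=\sum_{a\in S}G_{(2)}(C_<(a,S))$, use Proposition~\ref{prop:tableau-2} to replace the inner $G_{(2)}$ by $\tfrac12 cp^{(1,1)}+cp^{(2)}-\tfrac12 cp^{(1)}$ (after a $\Gamma_+$), compute $\Delta G_{(3)}$ via the three cases of Lemma~\ref{lem:induction-case} together with Proposition~\ref{prop:union-eval}, and identify the result against the listed $\Delta cp^{\alpha}$ using Lemma~\ref{lem:inductive-identify}. One correction: your expectation that $\Delta G_{(3)}$ contains ``no $\vee$-quantities of order $\ge 2$'' is wrong. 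In the case $a\in S_\vee$ the leftover sum $\sum_{a\in S_\vee} cp^{(1)}_r(C_<(a,S_\vee))$ is precisely $G_{(2)}(S_\vee)$, and expressing this in $cp^{\alpha}(S_\vee)$ and then changing origin via $\Gamma_-$ produces genuine $\mycpa{2}$ and $\mycpa{1,1}$ terms (the paper's final expression for $\Delta G_{(3)}$ has $\mycpr{1}\mycpa{2}$, $-\mycpa{2}$, $\tfrac12\mycpr{1}\mycpa{1,1}$, $-\tfrac12\mycpa{1,1}$). This does not break your argument---the matching against $\Delta cp^{\alpha}$ still goes through---but drop that heuristic before you start the computation or you will suspect an arithmetic error where there is none.
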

\begin{proof}
For a subtree $S$ of a trace forest rooted at $r$, we define a function $G_{(3)}(S) = \sum_{a \in S} G_{(2)}(S^{a}_<)$. For a standard tableau $T$ and its only tree $F_T$ in its trace forest, $G_{(3)}(F_T)$ is the number of tuples $(a,b,c)$ such that $(T,a,b,c)$ leads to a skew tableau of shape $\lambda / (3)$ as in Lemma~\ref{lem:skew-bijection}. We now compute the inductive form of $G_{(3)}$ using Lemma~\ref{lem:induction-case} and Proposition~\ref{prop:tableau-2}.

We notice that
\begin{align*}
G_{(2)}(S) &= \sum_{a \in S} \left(\Gamma_+ \left( \frac{1}{2} cp^{(1,1)} + cp^{(2)} - \frac{1}{2} cp^{1} \right) \right) (C_<(a,S)) \\
&= \sum_{a \in S} \left( \frac{1}{2} cp^{(1,1)} + cp^{(2)} - \frac{3}{2} cp^{1} \right) (C_<(a,S)).
\end{align*}

We now compute the inductive form of $G_{(3)}$ using Lemma~\ref{lem:induction-case} and Proposition~\ref{prop:union-eval}.

\begin{align*}
(\Delta G_{(3)})(S) &= \frac{1}{2} \mycpr{1,1} + \mycpr{2} - \frac{3}{2} \mycpr{1} + 2 \sum_{a \in S_<} cp^{(1)}_{r}(C_<(a,S_<)) \\
&\quad + (\mycpr{1}-1) \sum_{a \in S_\vee} cp^{(1)}_{r}(C_<(a,S_\vee)) + \sum_{a \in S_\vee} \left( \frac{1}{2} \mycpr{1,1} + \mycpr{2} - \frac{1}{2} \mycpr{1} \right) \\
&= \frac{1}{2} \mycpr{1,1} \mycpa{1} + \frac{1}{2} \mycpr{1} \mycpa{1,1} + \mycpr{2} \mycpa{1} + \mycpr{1} \mycpa{2} - \mycpr{1} \mycpa{1} + 3 \mycpr{2} - \mycpa{2} \\
&\quad + \frac{3}{2} \mycpr{1,1} - \frac{1}{2} \mycpa{1,1} - \frac{9}{2} \mycpr{1} - \frac{1}{2} \mycpa{1} \\
&= \left( \Delta \left( \frac{1}{6} cp^{(1,1,1)} + cp^{(2,1)} + cp^{(3)} - cp^{(1,1)} - 2 cp^{(2)} + \frac{5}{6} cp^{(1)} \right) \right)(S)
\end{align*}

We conclude the proof by Lemma~\ref{lem:inductive-identify} as in Proposition~\ref{prop:tableau-2}.
\end{proof}

Combining this proposition with Lemma~\ref{lem:young-lattice} and Proposition~\ref{prop:tableau-2}, we can also compute $f^{\lambda / (2,1)}$ and $f^{\lambda / (1,1,1)}$, and we obtain the character evaluated on a $3$-cycle for $\lambda \vdash n$: 
\[
(n)_3 \chi^{\lambda}_{(3,1^{n-3})} / f^{\lambda} = 3cp^{(3)}(\lambda) - \frac{3}{2} cp^{(1,1)}(\lambda) + \frac{3}{2} cp^{(1)}(\lambda) = 3\sum_{w \in \lambda} (c(w))^2 - 3\binom{n}{2}.
\]
And we have another example of bijective proof of character evaluation formula given by our scheme.

Always following our scheme, with some more tedious but \emph{automated} computation, we obtain the following result for $f^{\lambda / (4)}$. 

\begin{prop} \label{prop:tableau-4}
For a partition $\lambda \vdash n$,
\begin{align*}
(n)_{4} f^{\lambda / (4)} / f^{\lambda} &= \frac{1}{24} cp^{(1,1,1,1)}(\lambda) + \frac{1}{2} cp^{(2,1,1)}(\lambda) + \frac{1}{2} cp^{(2,2)}(\lambda) + cp^{(3,1)}(\lambda) + cp^{(4)}(\lambda) \\
&\quad - \frac{3}{4} cp^{(1,1,1)}(\lambda) - \frac{9}{2} cp^{(2,1)}(\lambda) - \frac{9}{2} cp^{(3)}(\lambda) + \frac{71}{24} cp^{(1,1)}(\lambda) + 6 cp^{(2)}(\lambda) - \frac{9}{4} cp^{(1)}(\lambda)
\end{align*}
\end{prop}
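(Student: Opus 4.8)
The plan is to iterate once more the inductive scheme used for Propositions~\ref{prop:tableau-2} and~\ref{prop:tableau-3}. For a subtree $S$ of a trace forest rooted at $r$, define $G_{(4)}(S) = \sum_{a \in S} G_{(3)}(S^a_<)$, where $S^a_<$ is, as before, the subtree on the cells $C_<(a,S)$ in the trace forest of $T^a$, which is rooted at $r_<$. By Lemma~\ref{lem:trace-forest-separation}, sliding $a$ first excludes the corner cell at the root and pushes the cells of $C_<(a,S)$ to its right, so by the same reasoning already used for $G_{(3)}$, for a standard tableau $T$ of shape $\lambda$ the quantity $G_{(4)}(F_T)$ counts exactly the $4$-tuples $(a,b,c,d)$ of distinct entries such that $(T,a,b,c,d)$ is associated, via Lemma~\ref{lem:skew-bijection}, to a skew tableau of shape $\lambda/(4)$ (the partition $(4)$ admitting a unique exclusion order $T_1$). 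Summing over the $f^\lambda$ standard tableaux $T$ gives $\sum_T G_{(4)}(F_T) = (n)_4\, f^{\lambda/(4)}$, so the proposition follows once we show that $G_{(4)}$, as a function on subtrees, equals the claimed linear combination of the $cp^{\alpha}$.

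For that, I first rewrite $G_{(3)}$ after a change of origin: since $S^a_<$ is rooted at $r_<$, Proposition~\ref{prop:tableau-3} and Lemma~\ref{lem:operator-gamma} give $G_{(3)}(S^a_<) = \bigl(\Gamma_+(\tfrac16 cp^{(1,1,1)} + cp^{(2,1)} + cp^{(3)} - cp^{(1,1)} - 2cp^{(2)} + \tfrac56 cp^{(1)})\bigr)(C_<(a,S))$, with $\Gamma_+$ applied termwise. Then I compute the inductive form $(\Delta G_{(4)})(S) = G_{(4)}(S) - G_{(4)}(S_<) - G_{(4)}(S_\vee)$ by splitting $\sum_{a\in S}$ along the three cases of Lemma~\ref{lem:induction-case}. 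The case $a=r$ contributes the above $\Gamma_+$-shifted expression evaluated on $S_<$, which Proposition~\ref{prop:cp-inductive-form} turns into a polynomial in the $\mycpr{k}(S)$. For $a \in S_<$ one has $C_<(a,S) = C_<(a,S_<)\cup\{r_<\}$ with $cp^{(k)}_r(\{r_<\}) = 1$, so Proposition~\ref{prop:union-eval} peels off the singleton; the main term cancels $-G_{(4)}(S_<)$, and the remaining difference of $\Gamma_+$-shifts reduces to lower sums $\sum_{a\in S_<}(\cdots)(C_<(a,S_<))$, which are precisely the already-known functions $G_{(j)}(S_<)$ for $j\le 3$ and become polynomials in the $\mycpr{k}(S)$. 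Symmetrically, for $a \in S_\vee$ one has $C_<(a,S) = C_<(a,S_\vee)\cup S_<$, so Proposition~\ref{prop:union-eval} peels off $S_<$-factors, which are the $\mycpr{k}(S)$; the main term cancels $-G_{(4)}(S_\vee)$ — here one uses $\Gamma_+\Gamma_- = \mathrm{id}$ from Lemma~\ref{lem:operator-gamma} to identify the origins $(r_\vee)_<$ and $r$ — and the remainder reduces to the known $G_{(j)}(S_\vee)$, which Proposition~\ref{prop:cp-inductive-form} turns into polynomials in the $\mycpa{k}(S)$. The net effect is an explicit expression of $(\Delta G_{(4)})(S)$ as a polynomial in $\mycpr{1},\ldots,\mycpr{4}$ and $\mycpa{1},\ldots,\mycpa{4}$ (and their monomials $\mycpr{1,1}$, $\mycpr{2,1}$, and so on).

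It then remains to check that this polynomial coincides with $(\Delta F)(S)$, where $F$ is the asserted combination $\tfrac{1}{24}cp^{(1,1,1,1)} + \tfrac12 cp^{(2,1,1)} + \tfrac12 cp^{(2,2)} + cp^{(3,1)} + cp^{(4)} - \tfrac34 cp^{(1,1,1)} - \tfrac92 cp^{(2,1)} - \tfrac92 cp^{(3)} + \tfrac{71}{24}cp^{(1,1)} + 6cp^{(2)} - \tfrac94 cp^{(1)}$: by Proposition~\ref{prop:cp-inductive-form} each $\Delta cp^{\alpha}$ for $\alpha$ of size at most $4$ is a polynomial in the $\mycpr{k},\mycpa{k}$ (extending the table displayed after that proposition to the five partitions of $4$), and one compares coefficients. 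Since $G_{(4)}(\varnothing) = F(\varnothing) = 0$ and the inductive forms agree, Lemma~\ref{lem:inductive-identify} gives $G_{(4)} = F$; evaluating at $S = F_T$ (now independent of $T$) and summing over the $f^\lambda$ standard tableaux finishes the proof. The only genuine difficulty is the size of the algebra: $G_{(3)}$ has six terms, each multiplied out by $\Gamma_+$ and by the union-decomposition of Proposition~\ref{prop:union-eval}, and the target involves all five partitions of $4$ together with the smaller ones, so the bookkeeping is heavy — this is the ``tedious but automated'' computation referred to in the statement. It is prudent to cross-check the outcome against the known evaluation of $\chi^{\lambda}_{(4,1^{n-4})}$ and against the consistency relations provided by Lemma~\ref{lem:conjugate-mu} and Proposition~\ref{lem:young-lattice} for the various $\mu \vdash 4$.
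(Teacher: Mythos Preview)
Your proposal is correct and follows exactly the approach the paper takes: the paper explicitly omits the proof, noting only that it is ``a long (but automatic) computation of inductive form'' obtained by iterating the scheme of Propositions~\ref{prop:tableau-2} and~\ref{prop:tableau-3}. You have spelled out precisely that iteration (defining $G_{(4)}$ from $G_{(3)}$, computing $\Delta G_{(4)}$ via Lemma~\ref{lem:induction-case}, Proposition~\ref{prop:union-eval}, and Lemma~\ref{lem:operator-gamma}, then identifying with $\Delta F$ by Lemma~\ref{lem:inductive-identify}), and your description of the bookkeeping and cross-checks is accurate.
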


Here we omit the proof, which is essentially a long (but automatic) computation of inductive form. Using Lemma~\ref{lem:conjugate-mu} we also have the expression of $f^{\lambda / (1,1,1,1)}$, and by Lemma~\ref{lem:young-lattice} applied to $\mu = (3)$ and $\mu = (1,1,1)$, we obtain the expression of $f^{\lambda / (3,1)}$ and $f^{\lambda / (2,1,1)}$. We can thus compute $\chi^{\lambda}_{(4,1^{n-4})}$ using Lemma~\ref{lem:character-as-skew-tableau} and we have the following formula:
\[
(n)_4 \chi^{\lambda}_{(4,1^{n-4})} / f^{\lambda} = 4 \sum_{w \in \lambda} (c(w))^3 + 4(2n-3) \sum_{w \in \lambda} c(w)
\]

This is indeed a bijective proof of the character evaluation formula we want. Furthermore, since we can also compute $f^{\lambda / (2,2)}$ using Lemma~\ref{lem:young-lattice}, we can also obtain a bijective proof for the character $\chi^{\lambda}_{(2,2,1^{n-4})}$. As a remark, we notice that our proofs above never depend on the precise structure of the trace forest $F_T$, but rather on the fact that it is a binary tree.

Even though the calculation above seems to be tedious, but it can be totally automatized using all the previous computational lemmas and propositions.

\section{Discussion}

In this article, using the notion of ``trace forest'' which reflects a fine structure in the famous jeu de taquin, we give a simple bijective proof of Theorem~\ref{thm:mu-2} through counting skew tableaux of different shapes. Inspired by this simple proof, we sketch a scheme for counting skew tableaux of more general shapes in an elementary way, using structural induction on the trace forest, and this scheme also leads to combinatorial proofs of several more sophisticated character evaluation formulae. It is also interesting that our proofs can actually be refined to hold for each standard tableau, which still lacks a good explanation.

Empirically, our scheme seems to work for $f^{\lambda / \mu}$ for $\mu$ a hook. And we have the following conjecture. 

\begin{conj}
Our scheme always gives the formula of $f^{\lambda / \mu}$ in terms of contents when $\mu$ is a hook.
\end{conj}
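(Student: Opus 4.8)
\medskip\noindent\emph{Proof proposal.} The plan is to prove the conjecture by induction on $|\mu|$. For a hook $\mu=(m,1^{j})$ one extends the functions $G_{(2)},G_{(3)},G_{(4)}$ of Propositions~\ref{prop:tableau-2}--\ref{prop:tableau-4} to a counting function $G_{(m,1^{j})}$ on subtrees of trace forests, built so that $G_{(m,1^{j})}(F_{T})=(n)_{k}\,f^{\lambda/\mu}/f^{\lambda}$ with $k=|\mu|$. The point of a hook is that it splits into an \emph{arm} $(1,2),\dots,(1,m)$, a \emph{corner} $(1,1)$, and a \emph{leg} $(2,1),\dots,(j{+}1,1)$: in Lemma~\ref{lem:skew-bijection} the first in-coming slide sends $a_{1}$ to the corner and deletes it, and Lemma~\ref{lem:trace-forest-separation} then forces the cells that can later be slid onto the remaining arm to lie in $S_{<}$, and those for the leg in $S_{\vee}$. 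After verifying the geometric fact that iterated slides on these two sides never interfere, the count factors, giving the recursion
\[
G_{(m,1^{j})}(S)=\binom{m+j-1}{m-1}\sum_{a\in S}G_{(m-1)}\bigl(S^{a}_{<}\bigr)\,G_{(1^{j})}\bigl(S^{a}_{\vee}\bigr),
\]
where $S^{a}_{<},S^{a}_{\vee}$ denote $C_{<}(a,S),C_{\vee}(a,S)$ viewed inside the trace forest of $T^{a}$, $G_{(m-1)}$ is the (shorter) row count, $G_{(1^{j})}$ the column count, and the binomial records the interleavings of the $m-1$ arm slides and $j$ leg slides. With $\mathcal{J}(F,H)(S):=\sum_{a\in S}(\Gamma_{+}F)\bigl(C_{<}(a,S)\bigr)\,(\Gamma_{-}H)\bigl(C_{\vee}(a,S)\bigr)$ for $F,H\in\Lambda_{c}$, this is $G_{(m,1^{j})}=\binom{m+j-1}{m-1}\,\mathcal{J}\bigl(G_{(m-1)},G_{(1^{j})}\bigr)$; the row recursion $G_{(m)}=\mathcal{J}(G_{(m-1)},1)$ and the column recursion $G_{(1^{j})}=\mathcal{J}(1,G_{(1^{j-1})})$ are the special cases $H=1$, $F=1$, so every hook is built from the constant $1$ by iterated applications of $\mathcal{J}$ with one strictly smaller argument. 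Thus the conjecture reduces to (i) the geometric non-interference claim and (ii) that $\mathcal{J}$ maps $\Lambda_{c}\times\Lambda_{c}$ into $\Lambda_{c}$ (which in particular subsumes the claim, so far checked only for $m\le4$, that $G_{(m)}\in\Lambda_{c}$ for every $m$).

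For (ii) I would compute the inductive form of $\mathcal{J}(F,H)$ with $F,H\in\Lambda_{c}$, by induction on $\mydeg F+\mydeg H$, splitting $\sum_{a\in S}$ into the three cases of Lemma~\ref{lem:induction-case}. The case $a=f$ gives $(\Gamma_{+}F)(S_{<})\,(\Gamma_{-}H)(S_{\vee})$, a product of content evaluations of $S_{<}$ and of $S_{\vee}$, which Proposition~\ref{prop:cp-inductive-form} writes as a polynomial in the $\mycpr{k}$'s and $\mycpa{k}$'s. In the case $a\in S_{<}$ one has $C_{<}(a,S)=C_{<}(a,S_{<})\cup\{f_{<}\}$ and $C_{\vee}(a,S)=C_{\vee}(a,S_{<})\cup S_{\vee}$ by Lemma~\ref{lem:induction-case}; expanding $\Gamma_{+}F$ and $\Gamma_{-}H$ over these disjoint unions by Proposition~\ref{prop:union-eval} and rebasing the relative contents at $\mathrm{root}(S_{<})$ with $\Gamma_{+},\Gamma_{-}$ (Lemma~\ref{lem:operator-gamma}), the residual sum over $a\in S_{<}$ (after cancellation against $\mathcal{J}(F,H)(S_{<})$) becomes a combination of values of $\mathcal{J}$ on arguments of strictly smaller total degree, times constants in the $S_{\vee}$-variables — hence in $\Lambda_{c}(S_{<})$ by induction, expressible in the $\mycpr{k}$'s via Proposition~\ref{prop:cp-inductive-form}; the case $a\in S_{\vee}$ is symmetric. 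So $\Delta\mathcal{J}(F,H)$ comes out as a polynomial in the $\mycpr{k},\mycpa{k}$, and Lemma~\ref{lem:inductive-identify} is invoked to identify $\mathcal{J}(F,H)$ with a unique element of $\Lambda_{c}$.

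The crux, and the step I expect to be hard, is precisely this last invocation. The inductive form produced above is \emph{a} polynomial in the $\mycpr{k},\mycpa{k}$, but Lemma~\ref{lem:inductive-identify} applies only once one knows it lies in the \emph{image} of $\Delta$ on $\Lambda_{c}$, i.e.\ in the span of $\{\Delta cp^{\alpha}\}$; this image is a \emph{proper} subspace of all such polynomials, since in a fixed degree there are far more monomials $\mycpr{\alpha}\mycpa{\beta}$ than partitions. The real content of the conjecture is thus that the polynomials produced by the hook recursion automatically land in $\Delta(\Lambda_{c})$. I would attack this by giving an intrinsic description of $\Delta(\Lambda_{c})$ — via the explicit generators of Proposition~\ref{prop:cp-inductive-form}, or equivalently as the polynomials compatible with the two orders in which $\Delta$ can be iterated over $S_{<}$ and $S_{\vee}$ — and then carrying the induction through to show $\mathcal{J}$ respects it. The reason to expect this for hooks is structural: in $\mathcal{J}(F,H)$ the two sides enter only as a \emph{product} of functions on the disjoint subtrees $S_{<}$ and $S_{\vee}$, so every expansion via Proposition~\ref{prop:union-eval} stays separated along that product, which is exactly the shape of the generators of $\Delta(\Lambda_{c})$.

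This is also where the hook hypothesis is genuinely used. For a non-hook such as $\mu=(2,2)$, after the first slide the remaining cells to delete are $(1,2),(2,1),(2,2)$, and the corner $(2,2)$ abuts both $S_{<}$ and $S_{\vee}$; the count no longer factors as a single sum over $a$ of a product of independent one-sided counts, so the natural recursion involves correlated sums over the two sides whose inductive form need not stay in $\Delta(\Lambda_{c})$ — which is why the scheme is not expected to work there. Finally, as a consistency check, Lassalle's explicit content-evaluation of $\chi^{\lambda}_{\mu}$ \cite{lassalle2008explicit} together with Lemmas~\ref{lem:character-as-skew-tableau},~\ref{lem:young-lattice} and~\ref{lem:conjugate-mu} already guarantees that a content-evaluation formula for $f^{\lambda/\mu}$ exists for every $\mu$; this does not by itself settle the conjecture — one still needs $G_{\mu}$ to coincide with that formula as a function on \emph{all} subtrees, not only on the $F_{T}$'s — but it does pin down the unique possible answer, so the $\Lambda_{c}$-stability of $\mathcal{J}$ is the sole remaining obstacle.
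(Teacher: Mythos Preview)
The statement is labeled a \emph{Conjecture} in the paper and is not proved there. Immediately after stating it, the authors write that two ``miracles'' would have to be explained: first, that the residual sums over $a\in S_{<}$ and $a\in S_{\vee}$ arising in the inductive-form computation can always be rewritten in the $\mycpr{k},\mycpa{k}$; second, that the resulting polynomial always lies in the image of $\Delta$ on $\Lambda_{c}$. There is therefore no paper proof to compare your attempt against.

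Your proposal is not a proof either, and to your credit you say so. Your items (i) and (ii) and your ``crux'' paragraph are precisely the paper's two miracles, recast in the cleaner language of the operator $\mathcal{J}$ and a degree induction. What you add beyond the paper is a plausible recursive structure for the hook case and a heuristic for why hooks are special; what you do not add is an argument that closes either gap. In particular, the step you yourself flag as hard --- showing that the inductive form of $\mathcal{J}(F,H)$ lands in $\Delta(\Lambda_{c})$ --- cannot be settled by the observation that ``the two sides enter only as a product'': arbitrary products $\mycpr{\alpha}\mycpa{\beta}$ already span a space strictly larger than $\Delta(\Lambda_{c})$, so the separated form alone does not force membership. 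Likewise, in your treatment of the $a\in S_{<}$ case you assert that after expanding via Proposition~\ref{prop:union-eval} and rebasing, the residual sum becomes ``a combination of values of $\mathcal{J}$ on arguments of strictly smaller total degree, times constants in the $S_{\vee}$-variables''; but the expansion also produces terms in which $C_{\vee}(a,S_{<})$ is genuinely entangled with the $a$-sum, and it is exactly this decoupling that the paper says ``there is no guarantee'' of.

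One small normalisation slip: if $G_{(m,1^{j})}(F_{T})$ counts ordered tuples $(a_{1},\dots,a_{k})$ producing the hook shape, the bijection of Lemma~\ref{lem:skew-bijection} gives $G_{(m,1^{j})}(F_{T})=(n)_{k}\,f^{\lambda/\mu}\,f^{\mu}/f^{\lambda}$ with $f^{\mu}=\binom{m+j-1}{m-1}$, so either your stated value $(n)_{k}\,f^{\lambda/\mu}/f^{\lambda}$ or the binomial coefficient in your recursion is off by that factor. This is cosmetic and does not affect the substance of the plan.
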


To prove this conjecture, we have to explain two ``miracles'' that occur in computations following to our scheme. First, in the computation of inductive form, we have some kind of sums over $a \in F_<$ and $a \in F_\vee$, but there is no guarantee that these sums can be expressed in $\mycpr{k}$ and $\mycpa{k}$. Second, when we obtain the inductive form in $\mycpr{k}$ and $\mycpa{k}$, we always find that it is the inductive form of some linear combination of $cp^{\alpha}$. This can be seen as a direct consequence of the fact that all character evaluation can be expressed in $cp^{\alpha}$, proved in \cite{corteel2004content} using algebraic methods, but no combinatorial proof is known. 

Unfortunately, for general $\mu$ our scheme does not always work. For instance, our scheme fail to compute $f^{\lambda / (2,2)}$ directly. However we can compute $f^{\lambda / (2,2)} + f^{\lambda / (3,1)}$ and $f^{\lambda / (3,1)}$ with our scheme, and it leads to an expression of $f^{\lambda / (2,2)}$. We have the intuition that our scheme, with Lemma~\ref{lem:young-lattice} and Lemma~\ref{lem:conjugate-mu}, would give enough linear combinations to work out all $f^{\lambda / \mu}$. We prove that this approach works for any $\mu \vdash k \leq 4$, and it extends easily to $\mu \vdash k \leq 6$. More general cases need further investigation.

When passing from $f^{\lambda / \mu}$ to $\chi^{\lambda}_{\mu, 1^{n-k}}$, we notice that $\chi^{\lambda}_{\mu, 1^{n-k}}$ often have a much simpler form, due to some cancellations in the sum. Thus it might be easier to directly deal with the inductive form of characters, and we might see the combinatorial reason behind.

\bibliographystyle{alpha}
\bibliography{jdt-fang}

\end{document}